\numberwithin{equation}{section}
  \newtheorem{theorem}{Theorem}[section]
  \newtheorem{proposition}[theorem]{Proposition}
  \newtheorem{lemma}[theorem]{Lemma}
  \newtheorem{corollary}[theorem]{Corollary}
  \newtheorem{definition}[theorem]{Definition}
  \newtheorem{example}[theorem]{Example}
\title[On the geometry of null hypersurfaces]{On the geometry of null  hypersurfaces of indefinite complex contact manifolds}
\author[Samuel Ssekajja]{Samuel Ssekajja*}
\newcommand{\acr}{\newline\indent}
\address{\llap{*\,} School of Mathematics\acr
 University of the Witwatersrand\acr
 Private Bag 3, Wits 2050\acr
South Africa}
\email{samuel.ssekajja@wits.ac.za} 
\thanks{}
\subjclass[2010]{Primary 53C25; Secondary 53C40, 53C50}
\keywords{Null hypersurfaces, Totally umbilic null hypersurfaces, Complex contact manifolds}
\begin{document}
\begin{abstract}
We study the geometry of null hypersurfaces in indefinite complex contact manifolds. We prove several classification results for a variety of well-known null hypersurfaces, including the totally umbilic, totally screen umbilic, and the screen conformal ones. Furthermore, a characterisation of the ambient space is given in case  the underlying null hypersurface is totally contact umbilic, totally contact screen umbilic or contact screen conformal, i.e. we have proved  that the ambient complex contact manifold must be a space of constant $GH$-sectional curvature of $-3$.
\end{abstract}
\maketitle
\section{Introduction} 

On any semi-Riemannian manifold there is a natural existence of null (lightlike) subspaces. In 1996, Duggal-Bejancu published a book \cite{db} on the null  geometry of submanifolds which fiiled an important missing part in the general theory of submanifolds. This book was later updated by Duggal-Sahin in \cite{ds2}, by collecting most of the new discoveries in the area since the first publiccation. Away from these two book, many researchers have investigated the geometry of null subspaces of semi-Riemannian manifolds. On the other hand, in about the same tine as in the book \cite{db}, Kupeli \cite{kup} introduces the theory of null geometry in a relatively different way. The main tool in his approach was the consideration of a factor bundle which is isomorphic to the screen distribution used by the authors in \cite{db}. In Chapter 8 of \cite{ds2}, the authors introduces the geometry of null submanifolds of indefinite quaternion Kaehler manifolds. Therein, the authors study the geometry of real null hypersurfaces, the structure of null submanifolds, both, of indefinite quaternion Kaehler manifolds and show that a quaternion null submanifold is always totally geodesic. This result implies that the study of null submanifolds, other than quaternion null submanifolds, is interesting. Then, they  deal with the geometry of screen real submanifolds in detail. As a generalization of real null hypersurfaces of quaternion Kaehler manifolds, they introduced $QR$-null submanifolds. Furthermaore, they show that the class of $QR$-null submanifolds does not include quaternion null submanifolds and screen real submanifolds. They also  introduced and studied the geometry of screen $QR$-null and screen $CR$-null submanifolds as generalizations of quaternion  null submanifolds and screen real submanifolds, and provided examples for each class of null submanifolds of indefinite quaternion Kaehler manifolds.

Despite all the above contributions, we remark that the null geometry of submanifolds of indefinite Sasakian 3-structure manifolds, as well as indefinite complex contact manifold have not yet been studied. In \cite{bla1}, the geometry of complex contact manifolds in the Riemannian sense is done in which the foundations on such manifiolds is given, from structures to their curvatures. The objective of this paper is to introduce the geometry of null hypersurfaces of indefinite complex contact manifolds. Several characterisation results are proved. In fact, we show that normal indefinite complex contact manifolds do not admit any totally umbilic, totally screen umbilic as well as screen conformal null hypersurfaces tangent to the vertical distribution. The notions of totally contact umbilic, totally contact screen umbilic and contact screen conformal are explored in details. In particular we prove that an indefinite complex contact manifold of constant  $GH$-sectional curvature different from $-3$ does not admit any totally contact umbilic, totally contact screen umbilic and contact screen conformal null hypersurfaces, tangent to the characteristic subbundle.

The paper is arranged as follows; In Section \ref{pre}, we quote some basic notions on complex contact manifolds as well as null hypersurfaces needed in the rest the paper. In Section \ref{main}, we prove several  non-existence results and in Section \ref{mainn}, we focus on contact umbilic, contact screen umbilic and contact screen conformal null hypersurface in indefinite complex contact space forms. 
 

\section{Preliminaries} \label{pre}

A {\it complex contact manifold} is a complex manifold, $\overline{M}$, of odd complex dimension $(2n+1)$ together with an open covering $\{\mathcal{O}_{i}\}$ by coordinate neighbourhoods such that: (1) On each $\mathcal{O}_{i}$ there is a holomorphic 1-form $\theta_{i}$ such that $\theta_{i}\wedge (d\theta_{i})^{n}\ne 0$. (2)  On $\mathcal{O}_{i}\cup \mathcal{O}_{j} \ne \emptyset$ there is a non-vanishing holomorphic function $f_{\alpha\beta}$ such that $\theta_{i}=f_{\ij}\theta_{j}$ (see \cite{bla1,bla2} for more details). Furthermore, the subspaces $\{X\in T_{m}\mathcal{O}_{i}: \theta_{i}(X)=0\}$ defines a non-integrable holomorphic subbundle $\mathcal{H}$ of complex dimension $2n$ called the {\it complex contact subbundle} or {\it horizontal subbundle}. The quotient $L=T\overline{M}/\mathcal{H}$ is a complex line bundle over $\overline{M}$ \cite[p. 49]{bla2}. Some well-known examples of complex contact metric manifolds include the complex Heisenberg group $H_{\mathbb{C}}$ and the odd-dimensional complex projective space, see \cite{bla1, bla2} for more details on these manifolds. Define a local section $U$ of $T\overline{M}$, i.e., a section of $T\mathcal{O}$,  by $du(U,X)=0$,  for every $X\in \mathcal{H}$, $u(U)=1$ and $v(U)=0$. Such local sections then define a global subbundle $\mathcal{V}$ by $\mathcal{V}|_{\mathcal{O}}=\mathrm{Span}\{U,JU\}$. Then, we have $T\overline{M}=\mathcal{H}\perp \mathcal{V}$ and we denote the projection map to $\mathcal{H}$ by $p:T\overline{M}\longrightarrow \mathcal{H}$. The subbundle $\mathcal{V}$ is called the {\it vertical subbundle} or {\it characteristic subbundle}. On the other hand, if $\overline{M}$ is a complex manifold with almost complex structure $J$, Hermitian metric $\overline{g}$ and open covering by coordinate neighbourhoods $\{\mathcal{O}_{i}\}$, $\overline{M}$ is called a {\it complex almost contact metric manifold} if it satisfies the following two conditions: (1) On each $\mathcal{O}_{i}$, there exists 1-forms $u_{i}$, $v_{i}=u_{i}\circ J$, with orthogonal dual vector fields $U_{i}$ and $V_{i}=-JU_{i}$, and (1,1)-tensor fields $G_{i}$ and $H_{i}=G_{i}J$ such that 
\begin{align}
	&\;\;\;\;\;\;\;\;H^{2}_{i}=G^{2}_{i}=-I+u_{i}\otimes U_{i}+v_{i}\otimes V_{i},\label{cm1}\\
	&\overline{g}(G_{i}X,Y)=-\overline{g}(X,G_{i}Y),\;\;\;\; \overline{g}(U_{i},X)=u_{i}(X),\label{cm2}\\
	&\;\;\;\;\;\;\;\;G_{i}J=-JG_{i},\;\;\; G_{i}U=0,\;\;\; u_{i}(U)=1,\label{cm3}
\end{align}
 for all $X,Y\in\Gamma(T\overline{M})$.  (2) On the overlaps $\mathcal{O}_{i}\cap \mathcal{O}_{j}\ne \emptyset$, the above tensors transform as $u_{j}=au_{i}-bv_{i}$, $v_{j}=bu_{i}+av_{i}$, $G_{j}=aG_{i}-bH_{i}$ and  $H_{j}=bG_{i}+aH_{i}$, for some functions $a$, $b$ defined on the overlaps with $a^{2}+b^{2}=1$.
 
 It is obvious that $H_{i}$ also anticommutes with $J$ and is skew-symmetric with respect to $\overline{g}$ and that $G_{i}$ and $H_{i}$ annihilate both $U$ and $V$. Furthermore, the local contact form $\theta$ is $u-iv$ to within a nonvanishing complex-valued function multiple (see \cite{bla1}). Moreover, given a complex contact manifold, a complex almost contact metric structure can be chosen such that 
 \begin{align}\nonumber
 	du(X,Y)&=\overline{g}(X,GY)+(\sigma \wedge v)(X,Y),\\
 	\mbox{and}\;\;\;\;dv(X,Y)&=\overline{g}(X,HY)-(\sigma \wedge u)(X,Y),\nonumber
  \end{align}
 for all $X,Y\in \Gamma(T\overline{M})$, for some 1-form $\sigma$.  In this case we say that $\overline{M}$ has a {\it complex contact metric structure} $(u, v, U, V, G, H, \overline{g})$  \cite{bla1,bla2}.  In this case $\sigma(X)=\overline{g}(\overline{\nabla}_{X}U,V)$, where $\overline{\nabla}$ denotes the Levi-Civita connection on $\overline{M}$. We refer to a complex contact manifold with a complex almost contact metric structure satisfying these conditions as a {\it complex contact metric manifold} \cite{bla1,bla2}.

 Next,  for a complex contact metric structure \cite[p. 237]{bla1} defined local tensor fields $h_{U}$  and $h_{V}$ by 
 \begin{align}\nonumber
 	 h_{U}=\frac{1}{2}\mathrm{sym}(\pounds_{U}G)\circ p\;\;\mbox{and}\;\;\; h_{V}=\frac{1}{2}\mathrm{sym}(\pounds_{V}H)\circ p,
 \end{align}
 where $\mathrm{sym}$ denotes the symmetric part; $h_{U}$ anticommutes with $G$, $h_{V}$ anticommutes with $H$, and 
 \begin{align}
 	&\overline{\nabla}_{X}U=-GX-Gh_{U}X+\sigma(X)V,\label{cm4}\\
 	\mbox{and}\;\;\;&\overline{\nabla}_{X}V=-HX-Hh_{V}X-\sigma(X)U.\label{cm5}
  \end{align}
In view of (\ref{cm4}) and (\ref{cm5}) one easily sees that the integral surfaces of $\mathcal{V}$ are totally geodesic submanifolds. Furthermore, the associated metric $\overline{g}$ is projectable with respect to the foliation induced by the integrable subbundle $\mathcal{V}$ if and only if $h_{U}$ and $h_{V}$ vanish (see \cite{bla1} for more details). Consider the  tensor fields $S$ and $T$ given by 
\begin{align}
	S&(X,Y)=[G,G](X,Y)+2\overline{g}(X,GY)U-2\overline{g}(X,HY)V\nonumber\\
	&\;\;\;\;+2\{v(Y)HX-v(X)HY\}+\sigma(GY)HX-\sigma(GX)HY\nonumber\\
	&\;\;\;\;\;\;\;\;\;\;\;\;\;\;\;\;\;\;\;\;+\sigma(X)GHY-\sigma(Y)GHX,\nonumber\\
	\mbox{and}\;\;\; T&(X,Y)=[H,H](X,Y)-2\overline{g}(X,GY)U+2\overline{g}(X,HY)V\nonumber\\
	&\;\;\;\;+2\{u(Y)GX-u(X)GY\}+\sigma(HX)GY-\sigma(HY)GX\nonumber\\
	&\;\;\;\;\;\;\;\;\;\;\;\;\;\;\;\;\;\;\;\;+\sigma(X)GHY-\sigma(Y)GHX,\nonumber
\end{align} 
for all $X,Y\in \Gamma(T\overline{M})$. In the above, $[G,G]$ and $[H,H]$ denotes the Nijenhuis tensors of $G$ and $H$, respectively. Then, a complex contact metric structure is {\it normal} \cite[p. 251]{bla1} if  $S(X,Y)=T(X,Y)=0$, for all $X,Y\in \Gamma(\mathcal{H})$ and $S(U,X)=T(V,X)=0$, for all $X\in \Gamma(T\overline{M})$. {\it An important consequence of normality is that $h_{U}=0$, for every $U\in \Gamma(\mathcal{V})$}, see \cite[p. 251]{bla1}. Moreover, on  a normal complex contact manifold, $\overline{\nabla}J$, $\overline{\nabla}G$ and $\overline{\nabla}H$ satisfies the relations (see \cite[p. 252]{bla1} for more details). 
\begin{align}
\overline{g}((\overline{\nabla}_{X}J)Y,Z)&=u(X)\nonumber\{d\sigma(Z,GY)-2\overline{g}(HY,Z)\}\nonumber\\
&\;\;\;\;\;\;\;\;\;\;\;\;\;\;\;\;\;\;+v(X)\{d\sigma(Z,HY)+ \overline{g}(GY,Z)\},\label{m1}\\
\overline{g}((\overline{\nabla}_{X}G)Y,Z)&=\sigma(X)\overline{g}(HY,Z)+v(X)d\sigma(GZ,GY)\nonumber\\
	&\;\;\;\;-2v(X)\overline{g}(HGY,Z)-u(Y)\overline{g}(X,Z) -v(Y)\overline{g}(JY,Z)\nonumber\\
	&\;\;\;\;\;\;\;\;\;\;\;\;\;\;\;\;\;\;\;\;+u(Z)\overline{g}(X,Y)+v(Z)\overline{g}(JX,Y),\label{e20}\\
	\overline{g}((\overline{\nabla}_{X}H)Y,Z)&=-\sigma(X)\overline{g}(GY,Z)-u(X)d\sigma(HZ,HY)\nonumber\\
	&\;\;\;\;-2u(X)\overline{g}(GHY,Z)-v(Y)\overline{g}(X,Z) +u(Y)\overline{g}(JY,Z)\nonumber\\
	&\;\;\;\;\;\;\;\;\;\;\;\;\;\;\;\;\;\;\;\;\;\;+v(Z)\overline{g}(X,Y)-u(Z)\overline{g}(JX,Y),\label{e21}
	\end{align}
for all $X,Y,Z\in \Gamma(T\overline{M})$.

For a unit vector $X\in \mathcal{H}_{m}$, the plane in $T_{m}\overline{M}$ spanned by $X$ and $Y=a GX+bHX$, $a,b\in \mathbb{R}$, $a^{2}+b^{2}=1$, is called a $GH$-plane section, and its sectional curvature, $K(X,Y)$, the $GH$-sectional curvature of the plane section. For a given vector $X$, $K(X,Y)$ is independent of the vector $Y$ in the plane of $GX$ and $HX$ if and only if $K(X,GX)=K(X,HX)$ and $\overline{g}(\overline{R}(X,GX)HX,X)=0$. Here, $\overline{R}$ denotes the curvature tensor of $\overline{M}$. Let $\overline{M}$ be a normal complex contact metric manifold; if the $GH$-sectional curvature is independent of the choice of $GH$-section at each point, it is constant on the manifold, and we say that $\overline{M}$ is a {\it complex contact space form} \cite[p. 253]{bla1}. Moreover, the curvature tensor $\overline{R}$ satisfies 
\begin{align}\label{cm6}
	\overline{R}(X,Y)Z&=\frac{c+3}{4}[\overline{g}(Y,Z)X-\overline{g}(X,Z)Y+\overline{g}(Z,JY)JX-\overline{g}(Z,JX)JY	\nonumber\\
	&\;\;\;\;\;\;\;\;\;\;\;\;\; +2\overline{g}(X,JY)JZ]\nonumber\\
	&+\frac{c-1}{4}[-\{u(Y)u(Z)+v(Y)v(Z)\}X+\{u(X)u(Z)\nonumber\\
	&\;\;\;\;\;\;\;\;\;\;\;\;\;+v(X)v(Z)\}Y+2u\wedge v(Z,Y)JX-2u\wedge v(Z,X)JY\nonumber\\
	&\;\;\;\;\;\;\;\;\;\;\;\;\; +4u\wedge v(X,Y)JZ+\overline{g}(Z,GY)GX-\overline{g}(Z,GX)GY\nonumber\\
	&\;\;\;\;\;\;\;\;\;\;\;\;\;+2\overline{g}(X,GY)GZ+ \overline{g}(Z,HY)HX-\overline{g}(Z,HX)HY\nonumber\\
	&\;\;\;\;\;\;\;\;\;\;\;\;\;+2\overline{g}(X,HY)HZ+\{-u(X)\overline{g}(Y,Z)+u(Y)\overline{g}(X,Z)\nonumber\\
	&\;\;\;\;\;\;\;\;\;\;\;\;\;+v(X)\overline{g}(JY,Z)-v(Y)\overline{g}(JX,Z)+2v(Z)\overline{g}(X,JY)\}U\nonumber\\
	&\;\;\;\;\;\;\;\;\;\;\;\;\; +\{-v(X)\overline{g}(Y,Z)+v(Y)\overline{g}(X,Z)-u(X)\overline{g}(JY,Z)\nonumber\\
	& \;\;\;\;\;\;\;\;\;\;\;\;\;+u(Y)\overline{g}(JX,Z)-2u(Z)\overline{g}(X,JY)\}V]\nonumber\\
	&-\frac{3}{4}(d\sigma(U,V)+c+1)[\{v(X)u\wedge v(Z,Y)-v(Y)u\wedge v(Z,X)\nonumber\\
	&\;\;\;\;\;\;\;\;\;\;\;\;\; +2v(Z)u\wedge v(X,Y)\}U-\{u(X)u\wedge v(Z,Y)\nonumber\\
	& \;\;\;\;\;\;\;\;\;\;\;\;\;-u(Y)u\wedge v(Z,X)+2u(Z)v(X,Y)\}V],
\end{align} 
for all $X,Y,Z\in \Gamma(T\overline{M})$.

Let $(\overline{M},\overline{g})$ be a $(m+2)$-dimensional semi-Riemannian manifold with index $q$, where  $0< q < (m+2)$, and consider a null hypersurface $(M,g)$ of $\overline{M}$. Let $g$ be the induced tensor field by $\overline{g}$ on $M$. Then, $M$ is called a \textit{null hypersurface} of $\overline{M}$ if $g$ is of constant rank $m$ and the normal bundle $TM^{\perp}$ is a distribution of rank 1 on $M$ \cite{db}. Here,  the fibres of the vector bundle $TM^{\perp}$ are defined as $T_{x}M^{\perp}=\{Y_{x}\in T_{x}\overline{M}:\overline{g}_{x}(X_{x},Y_{x})=0,\;\forall\, X_{x}\in T_{x}M\}$, for any $x\in M$.  Let $M$  be a null hypersurface, and consider the complementary distribution $S(TM)$ to $TM^{\perp}$ in $TM$, which is called a \textit{screen distribution} \cite{db}. It is well-known that $S(TM)$ is non-degenerate (see \cite{db}). Therefore, we have the decomposition
\begin{align}
	TM=S(TM)\perp TM^{\perp}.
\end{align}
As $S(TM)$ is non-degenerate with respect to $\overline{g}$, we have $T\overline{M}=S(TM)\perp S(TM)^{\perp}$,
where $S(TM)^{\perp}$ is the complementary vector bundle to $S(TM)$ in $T\overline{M}|_{M}$. Let $(M,g)$ be a null hypersurface of $(\overline{M},\overline{g})$ . Then, there exists a unique vector bundle $\mathrm{tr}(TM)$, called the \textit{null transversal bundle} \cite{db} of $M$ with respect to $S(TM)$,  of rank 1 over $M$ such that for any non-zero section $\xi$ of $TM^{\perp}$ on a coordinate neighbourhood $\mathcal{U}\subset M$, there exists a unique section $N$ of $\mathrm{tr}(TM)$ on $\mathcal{U}$ satisfying 
	$\overline{g}(\xi,N)=1$, $\overline{g}(N,N)=\overline{g}(N,Z)=0$, for all  $Z\in \Gamma(S(TM))$. Consequently, we have the following decomposition of $T\overline{M}$.  
\begin{align}\nonumber
	T\overline{M}|_{M}=S(TM)\perp \{TM^{\perp}\oplus \mathrm{tr}(TM)\}=TM\oplus tr(TM).
\end{align}
  Let $\nabla$ and $\nabla^{*}$ denote the induced connections on $M$ and $S(TM)$, respectively, and $P$ be the projection of $TM$ onto $S(TM)$, then the local Gauss-Weingarten equations of $M$ and $S(TM)$ are the following \cite{db}.
\begin{align}
 \overline{\nabla}_{X}Y&=\nabla_{X}Y+B(X,Y)N,\;\;\;\;\;\overline{\nabla}_{X}N=-A_{N}X+\tau(X)N,\label{flow6}\\
  \nabla_{X}PY&= \nabla^{*}_{X}PY + C(X,PY)\xi,\;\;\;\nabla_{X}\xi =-A^{*}_{\xi}X -\tau(X) \xi,\label{flow7}
 \end{align}
 for all $X,Y\in\Gamma(TM)$, $\xi\in\Gamma(TM^{\perp})$ and $N\in\Gamma(\mathrm{tr}(TM))$, where $\overline{\nabla}$ is the Levi-Civita connection on $\overline{M}$. In the above setting, $B$ is the local second fundamental form of $M$ and $C$  is the local second fundamental form on $S(TM)$.  $A_{N}$ and $A^{*}_{\xi}$ are the shape operators on $TM$ and $S(TM)$ respectively, while $\tau$ is a 1-form on $TM$. The above shape operators are related to their local fundamental forms by 
$g(A^{*}_{\xi}X,Y) =B(X,Y)$, $g(A_{N}X,PY) = C(X,PY)$, for any $X,Y\in \Gamma(TM)$. It follows easily that $B(X,\xi)=0$, for all  $X\in \Gamma(TM)$. Moreover, we have $\overline{g}(A^{*}_{\xi}X,N)=0$, $\overline{g}(A_{N} X,N)=0$, for all $ X\in\Gamma(TM)$. Thus,  we immediately  notice that $A_{\xi}^{*}$ and $A_{N}$ are both screen-valued operators.  Let $\vartheta=\overline{g}(N,\boldsymbol{\cdot})$ be a 1-form metrically equivalent to $N$ defined on $\overline{M}$. Take $\eta=i^{*}\vartheta$
to be its restriction on $M$, where $i:M\rightarrow \overline{M}$ is the inclusion map. Then it is easy to show that 
\begin{align}\label{p40}
	(\nabla_{X}g)(Y,Z)=B(X,Y)\eta(Z)+B(X,Z)\eta(Y), 
\end{align}
for all $X,Y,Z\in \Gamma(TM)$. Consequently,  $\nabla$ is generally \textit {not} a metric connection with respect to $g$. However, the induced connection $\nabla^{*}$ on $S(TM)$ is a metric connection. Denote by $\overline{R}$ and  $R$ the curvature tensors of the connection $\overline{\nabla}$ on $\overline{M}$ and the induced linear connections $\nabla$, respectively. Using the Gauss-Weingarten formulae, we obtain the following  Gauss-Codazzi equations for $M$ and $S(TM)$ (see details in \cite{db,ds2}).
\begin{align}
\overline{g}(\overline{R}(X,Y)Z,\xi)=&(\nabla_{X}B)(Y,Z)-(\nabla_{Y}B)(X,Z)\nonumber\\
&\;\;\;\;\;\;\;\;\;\;\;+\tau(X)B(Y,Z)-\tau(Y)B(X,Z),\label{v34}\\
\overline{g}(\overline{R}(X,Y)PZ,N)=&(\nabla_{X}C)(Y,PZ)-(\nabla_{Y}C)(X,PZ)\nonumber\\
&\;\;\;\;\;\;\;\;\;\;\;\;\;\;\;-\tau(X)C(Y,PZ)+\tau(Y)C(X,PZ),\label{v35}
\end{align}
for all $X,Y,Z\in \Gamma(TM)$, $\xi\in \Gamma(TM^{\perp})$ and $N\in \Gamma(\mathrm{tr}(TM))$, where $\nabla B$ and $\nabla C$ are defined as follows;
\begin{align}
	(\nabla_{X}B)(Y,Z)&=XB(Y,Z)-B(\nabla_{X}Y,Z)-B(Y,\nabla_{X}Z),\label{p100}\\
	(\nabla_{X}C)(Y,PZ)&=XC(Y,PZ)-C(\nabla_{X}Y,PZ)-C(Y,\nabla^{*}_{X}PZ),\label{p101}
\end{align}	
for all $X,Y,Z\in \Gamma(TM)$.
   
\section{Some basic results} \label{main}
Let $\overline{M}:=(\overline{M},u, v, U, V, G, H, \overline{g})$ be a $(4n+2)$-dimensional indefinite complex contact manifold, where $\overline{g}$ is a semi-Riemannian metric of index $4q$; $0<q<n$. Next, we construct an indefinite complex contact structure from an indefinite Sasakian 3-structure. 
\begin{example}
\rm{
	Let $(\tilde{M}^{4n+3},\phi_{i}, \xi_{i},\eta_{i},\overline{g})$, for all $i=1,2,3$, be a 3-structure manifold.  Ishihara and Konishi \cite{isa} proved that if one of the contact structures, say $(\phi_{1},\xi_{1},\eta_{1},\overline{g})$,  of a Riemnnnian manifold $\tilde{M}^{4n+3}$ with a (Sasakian) 3-structure is regular,  the base manifold $\overline{M}$ of the induced fibration is a complex contact manifold.  It is easy to see that the above result is also true for a semi-Riemannian manifold $\tilde{M}^{4n+3}_{4q}$, in which the Riemannian metric $\overline{g}$ is replaced with a semi-Riemannian metric of constant index $4q$, $0<q<n$, with one of its structures regular. In this case, the base manifold becomes an indefinite complex contact manifold. Now, let $\tilde{M}^{4n+3}_{4q}$ be an indefinite Sasakian 3-structure manifold. The indefinite complex contact structure on the base space $\overline{M}_{4q}^{4n+2}$ is constructed as follows.	 Consider the structure $(\phi_{1},\xi_{1},\eta_{1},\overline{g})$ as above. Let $\pi :\tilde{M}_{4q}^{4n+3}\longrightarrow \overline{M}_{4q}^{4n+2}$ be the Boothby-Wang	fibration of $\tilde{M}^{4n+3}_{4q}$ over a cosymplectic manifold $\overline{M}^{4n+2}_{4q}$ of integral class (see \cite{bla1} for details). Denoting the horizontal lift with respect to the principal $S^{1}$ bundle connection 1-form $\eta_{1}$  by $\tilde{\pi}$. Then, $JX=\pi_{*}\phi_{1}\tilde{\pi}X$ and, in the analogous way as in \cite{bla1},  the projected metric form an indefinite Kaehler structure on $\overline{M}_{4q}^{4n+2}$. For a coordinate neighbourhood $\mathcal{U}\subset \overline{M}$  and a local cross section $s$ of $\tilde{M}^{4n+3}_{4q}$ over $\mathcal{U}$, the 1-forms $u$ and $v$ and a tensor field $G$ defined on $\mathcal{U}$ by $u(X)\circ \pi=\eta_{2}(s_{*}X)$, $v(X)\circ \pi=\eta_{3}(s_{*}X)$	and $GX=\pi_{*}[\phi_{2}s_{*}X-\eta_{1}(s_{*}X)\xi_{3}+\eta_{3}(s_{*}X)\xi_{1}]$, define the indefinite complex contact and complex almost contact structures on $\overline{M}_{4q}^{4n+2}$.
	} 
\end{example}
Let us set $J_{1}:=J$, $J_{2}:=G$ and $J_{3}:=H$. Then, we have the following lemma.
\begin{lemma}\label{lemma1}
	$J_{a}$'s, for all $a=1,2,3$, satisfies 
	\begin{align}
		&J_{1}J_{2}=-J_{3},\;\;\;J_{1} J_{3}=-J_{3}J_{1}=J_{2},\label{j1}\\
		J_{3}&J_{2}=-J_{2}J_{3}=J_{1}+u\otimes V-v\otimes U,\label{j2}\\
		J_{2}U=J_{3}&U=J_{3}V=0,\;\;\;\overline{g}(J_{3}X,Y)=-\overline{g}(X,J_{3}Y),\label{j3}
	\end{align}
	for all $X,Y\in \Gamma(T\overline{M})$.
\end{lemma}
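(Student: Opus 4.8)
The plan is to reduce every identity to the single defining relation $H=GJ$ (i.e. $J_{3}=J_{2}J_{1}$) together with the almost-complex relation $J^{2}=-I$, the anticommutation $JG=-GJ$ and the normalisation $GU=0$ from (\ref{cm3}), the squaring relation $G^{2}=-I+u\otimes U+v\otimes V$ from (\ref{cm1}), and the skew-symmetry of $G$ from (\ref{cm2}). Along the way I will use the scalar identities $u\circ J=v$ (definition of $v$) and $v\circ J=-u$ (from $v=u\circ J$ together with $J^{2}=-I$), as well as $V=-JU$, $JV=U$, and $GU=GV=0$. No analytic input is needed; everything is a substitution into the structure equations.

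First I would dispose of the two first-order identities in (\ref{j1}). Since $H=GJ$ and $JG=-GJ$, one has $J_{1}J_{2}=JG=-GJ=-H=-J_{3}$. For the second line, $J_{1}J_{3}=JH=J(GJ)=(JG)J=-GJ^{2}=G$ because $J^{2}=-I$, while $J_{3}J_{1}=HJ=(GJ)J=GJ^{2}=-G$; hence $J_{1}J_{3}=-J_{3}J_{1}=J_{2}$.

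The main computation is (\ref{j2}), where the rank-one correction terms $u\otimes V-v\otimes U$ emerge, and this is where I expect the only genuine (if mild) bookkeeping difficulty. Here I would write $J_{3}J_{2}=HG=(GJ)G=-G^{2}J$ and substitute $G^{2}=-I+u\otimes U+v\otimes V$. The delicate point is to push $J$ correctly through the rank-one pieces, using $(u\otimes U)\circ J=(u\circ J)\otimes U=v\otimes U$ and $(v\otimes V)\circ J=(v\circ J)\otimes V=-u\otimes V$, which yields $HG=J+u\otimes V-v\otimes U$. An entirely parallel computation gives $GH=G^{2}J$, so $-GH=J+u\otimes V-v\otimes U$, establishing $J_{3}J_{2}=-J_{2}J_{3}=J_{1}+u\otimes V-v\otimes U$. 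Care must be taken that each twisted tensor product is composed on the correct side, and that the sign carried by $v\circ J=-u$ is not dropped.

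Finally, for (\ref{j3}) I would read off $J_{2}U=GU=0$ from (\ref{cm3}), and then derive $J_{3}U=HU=GJU=-GV=0$ and $J_{3}V=HV=GJV=GU=0$ using $JU=-V$, $JV=U$ and $GV=0$ (so these recover the stated fact that $G,H$ annihilate $U,V$). The skew-symmetry $\overline{g}(J_{3}X,Y)=-\overline{g}(X,J_{3}Y)$ I would obtain by chaining the skew-symmetry of $G$ in (\ref{cm2}) with that of $J$ (valid since $\overline{g}$ is Hermitian): $\overline{g}(HX,Y)=\overline{g}(GJX,Y)=-\overline{g}(JX,GY)=\overline{g}(X,JGY)=-\overline{g}(X,GJY)=-\overline{g}(X,HY)$. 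Assembling these blocks gives all of (\ref{j1})--(\ref{j3}).
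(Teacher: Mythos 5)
Your proof is correct and follows essentially the same route as the paper's: both reduce all three identities to the structure equations (\ref{cm1})--(\ref{cm3}) via $H=GJ$, $JG=-GJ$, $G^{2}=-I+u\otimes U+v\otimes V$, and the skew-symmetry of $G$ and $J$, with the same handling of the rank-one terms through $u\circ J=v$ and $v\circ J=-u$. The only (immaterial) deviations are that the paper gets $HU=GJU=-JGU=0$ by anticommutation where you invoke $GV=0$, and your final skew-symmetry chain avoids a sign typo present in the paper's last equality.
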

\begin{proof}
	Using (\ref{cm3}), we have $J_{1}J_{2}=JG=-GJ=-H=-J_{3}$. On the other hand, $J_{1}J_{3}=JH=-HJ=-J_{3}J_{1}=-HJ=-GJ^{2}=G=J_{2}$. This proves (\ref{j1}). Then, in view of (\ref{cm1}) and (\ref{cm3}), we have $J_{3}J_{2}=HG=GJG=-GGJ=-GH=-J_{2}J_{3}$, and $-J_{2}J_{3}=-GH=-G^{2}J=J-(u\circ J)\otimes U-(v\circ J)\otimes V=J_{1}+u\otimes V-v\otimes U$, proving (\ref{j2}). Note that $J_{2}U=GU=0$, by (\ref{cm3}). Also, $J_{3}U=HU=GJU=-JGU=0$. Furthermore, $J_{3}V=HV=GJV=-GJ^{2}U=GU=0$. Finally, for any $X,Y\in \Gamma(T\overline{M})$, we have $\overline{g}(J_{3}X,Y)=\overline{g}(HX,Y)=\overline{g}(GJX,Y)=-\overline{g}(JX,GY)=\overline{g}(X,JGY)=-\overline{g}(X,GJY)=-\overline{g}(X,HY)=\overline{g}(X,J_{3}Y)$, in which we have used (\ref{cm2}) and (\ref{cm3}), which completes the proof.
\end{proof}
Let $(M,g)$ be a null hypersurface of $\overline{M}$. Then, for each $\xi_{x}\in \Gamma(TM^{\perp})$ at $x\in M$, we have $\overline{g}(\xi_{x},\xi_{x})=0$. This means that $\xi_{x}\in \Gamma(T_{x}M)$. Since $J_{1}$ is a complex structure on $\overline{M}$, we have, from (\ref{cm2}), that $\overline{g}(\xi_{x},J_{a}\xi_{x})=0$, for all $a=1,2,3$. Hence, $J_{a}\xi_{x}$ is tangent to $M$. Thus, $J_{a}(TM^{\perp})$ is a distribution on $M$ of rank 3, such that $J_{a}TM^{\perp}\cap TM^{\perp}=\{0\}$. We can, therefore, choose a screen distribution $S(TM)$ of $M$ such that $J_{a}(TM^{\perp})\subset S(TM)$. As the vector fields $U$ and $V$ are space-like, we note that none of them belongs to $TM^{\perp}$ or $\mathrm{tr}(TM)$. Therefore, we can assume that the vertical distribution $\mathcal{V}=\mathrm{Span}\{U,V\}\subset S(TM)$. Then, we have $\overline{g}(J_{a}N,\xi)=-\overline{g}(N,J_{a}\xi)=0$ and $\overline{g}(N,J_{a}N)=0$, for all $N\in \Gamma(\mathrm{tr}(TM))$. Hence deduce that $J_{a}N$ is tangent to $M$ and belongs to $S(TM)$. We know that $\xi$ and $N$ are null vector fields satisfying $\overline{g}(\xi,N)=1$. Thus, $J_{a}\xi$ and $J_{a}N$  are also null vector fields with $\overline{g}(J_{a}\xi,J_{a}N)=1$, for all $a=1,2,3$. Otherwise, we have $\overline{g}(J_{a}\xi,J_{b}N)=0$, for all $a\ne b$. Hence, $J_{a}TM^{\perp}\oplus J_{a}\mathrm{tr}(TM)$ is a vector subbundle of $S(TM)$ rank of $6$. Then, there exist a non-degenerate distribution $D_{0}$ on $M$ such that 
\begin{align}\label{e10}
	S(TM)= \{D_{1}\oplus D_{2}\}\perp D_{0}\perp\mathcal{V},
\end{align}
where 
\begin{align}
	D_{1}&= J TM^{\perp} \perp G TM^{\perp} \perp H TM^{\perp},\\
\mbox{and}\;\;\; D_{2}&= J\mathrm{tr}(TM) \perp G\mathrm{tr}(TM)\perp H\mathrm{tr}(TM).
\end{align}
We have the following characterisation for the distribution $D_{0}\perp \mathcal{V}$.
\begin{proposition}
	$D_{0}\perp \mathcal{V}$ is invariant with respect to $J_{a}$, for all $a=1,2,3$.
\end{proposition}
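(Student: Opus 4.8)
The plan is to establish invariance in two stages: first that the vertical subbundle $\mathcal{V}$ is invariant under each $J_a$, and then that for $X\in\Gamma(D_0)$ the image $J_aX$ lands in $D_0\perp\mathcal{V}$. Since $D_0\perp\mathcal{V}$ is spanned by these two pieces, linearity then yields the full claim.

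For $\mathcal{V}$ I would argue directly. As $V=-JU$, we get $J_1U=JU=-V$ and $J_1V=-J^2U=U$, so $J_1$ preserves $\mathrm{Span}\{U,V\}$. For $J_2=G$ the relation $GU=0$ in (\ref{cm3}) together with $GJ=-JG$ gives $GV=-GJU=JGU=0$, and Lemma \ref{lemma1} already records $J_3U=J_3V=0$ in (\ref{j3}); hence $J_2$ and $J_3$ send $\mathcal{V}$ into $\{0\}\subset\mathcal{V}$. Thus $\mathcal{V}$ is $J_a$-invariant for every $a$.

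Next take $X\in\Gamma(D_0)$. By the decomposition (\ref{e10}) the bundle $D_0\perp\mathcal{V}$ is exactly the orthogonal complement of $D_1\oplus D_2$ inside the non-degenerate screen $S(TM)$, so it suffices to check that $J_aX$ is orthogonal to the spanning fields $\xi$, $N$, $J_b\xi$ and $J_bN$ for $b=1,2,3$. The engine is the skew-symmetry of each $J_a$ — valid for $J_1$ since $\overline{g}$ is Hermitian, for $J_2$ by (\ref{cm2}), and for $J_3$ by (\ref{j3}) — which converts every such inner product into $-\overline{g}(X,J_a(\cdot))$. Orthogonality to $\xi$ and $N$ is then immediate, because $J_a\xi\in D_1$ and $J_aN\in D_2$, both annihilated by $X$ since $X\perp D_1,D_2$; this already places $J_aX$ in $S(TM)$.

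The substance is orthogonality to $D_1$ and $D_2$, i.e.\ evaluating $\overline{g}(J_aX,J_b\xi)=-\overline{g}(X,J_aJ_b\xi)$ and its analogue with $N$. Here I would expand the composites $J_aJ_b$ using Lemma \ref{lemma1}: the off-diagonal products return, up to sign, the third operator $J_c$, while the diagonal squares $J_a^2$ equal $-I$ for $a=1$ and $-I+u\otimes U+v\otimes V$ for $a=2,3$ by (\ref{cm1}). Applied to $\xi$ and $N$, the $\mathcal{V}$-valued correction terms appearing in $J_2^2$, $J_3^2$, $J_2J_3$ and $J_3J_2$ all carry a factor $u(\xi)$, $v(\xi)$, $u(N)$ or $v(N)$, each of which vanishes because $U,V\in S(TM)$ are orthogonal to both $TM^{\perp}$ and $\mathrm{tr}(TM)$. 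Consequently $J_aJ_b\xi\in\Gamma(D_1\perp\mathcal{V})$ and $J_aJ_bN\in\Gamma(D_2\perp\mathcal{V})$, both killed by $X$ since $X\perp D_1,D_2,\mathcal{V}$; this forces $J_aX\in\Gamma(D_0\perp\mathcal{V})$, and combined with the vertical case the proof is complete. The one point demanding care — the main obstacle — is precisely the bookkeeping of these $\mathcal{V}$-correction terms in the quaternion-type identities (\ref{j2}) and (\ref{cm1}); once the vanishing of $u(\xi),v(\xi),u(N),v(N)$ is recorded, everything else is formal manipulation driven by skew-symmetry.
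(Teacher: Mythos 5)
Your proof is correct and follows essentially the same route as the paper's: skew-symmetry of each $J_{a}$ together with the products recorded in Lemma \ref{lemma1} shows that $J_{a}X$ is orthogonal to $TM^{\perp}\oplus \mathrm{tr}(TM)$ and to $D_{1}\oplus D_{2}$, hence lies in $D_{0}\perp \mathcal{V}$. The only differences are organisational: you split off $\mathcal{V}$ and handle it by directly computing $J_{a}U$ and $J_{a}V$ rather than by the same orthogonality argument, and you are in fact more careful than the paper in verifying that the $\mathcal{V}$-valued correction terms in $J_{a}^{2}$ and $J_{a}J_{b}$ vanish when evaluated on $\xi$ and $N$.
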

\begin{proof}
	Using (\ref{cm2}) and Lemma \ref{lemma1}, we gave $\overline{g}(J_{a}X,Y)=-\overline{g}(X,J_{a}Y)$, $a=1,2,3$, for all $X\in \Gamma(D_{0}\perp \mathcal{V})$ and $Y\in \Gamma(TM)$. Now, for $Y=J_{a}\xi$, the last relation gives $\overline{g}(J_{a}X,J_{a}\xi)=-\overline{g}(X,J_{a}^{2}\xi)=-\overline{g}(X,\xi)=0$, and by Lemma \ref{lemma1}, we have $\overline{g}(J_{a}X,J_{b}\xi)=-\overline{g}(X,J_{a}J_{b}\xi)=\overline{g}(X,J_{c}\xi)=0$, $a\ne b$, for any $Y=J_{a}\xi \in  \Gamma(D_{1})$. Hence, $J_{a}X\perp D_{1}$. Also, we have $\overline{g}(J_{a}X,\xi)=-\overline{g}(X,J_{a}\xi)=0$, which shows that $J_{a}X\perp TM^{\perp}$. On the other hand, using (\ref{cm1}), (\ref{cm2}) and Lemma \ref{lemma1}, we have $\overline{g}(J_{a}X,J_{a}N)=-\overline{g}(X,J_{a}^{2}N)=\overline{g}(X,N)=0$ and $\overline{g}(J_{a}X,J_{b}N)=-\overline{g}(X,J_{a}J_{b}N)=-\overline{g}(X,J_{c}N)=0$, for any $N\in \Gamma(\mathrm{tr}(TM))$. Hence, $J_{a}X\perp \{\{D_{1}\oplus D_{2}\}\perp TM^{\perp}\}$. Finally, we have $\overline{g}(J_{a}X,N)=-\overline{g}(X,J_{a}N)=0$, and hence $J_{a}X\perp \{\{D_{1}\oplus D_{2}\}\perp \{TM^{\perp}\otimes \mathrm{tr}(TM)\}\}$, that is $J_{a}(D_{0}\perp \mathcal{V})=D_{0}\perp \mathcal{V}$, for all $a=1,2,3$, which completes the proof.
\end{proof}
\noindent The decompositions of $TM$ and $T\overline{M}$ becomes; 
\begin{align}
	TM&=TM^{\perp}\perp\{D_{1}\oplus D_{2}\}\perp D_{0}\perp \mathcal{V},\label{00}\\
	\mbox{and}\;\;\;\; T\overline{M}&= \{TM^{\perp}\oplus \mathrm{tr}(TM)\}\perp\{D_{1}\oplus D_{2}\}\perp D_{0}\perp \mathcal{V}.\label{oo}
\end{align}
From the decompositions (\ref{00}) and (\ref{oo}), we have the following result.
\begin{proposition}
	If $(M,g)$ is a null hypersurface of an indefinite complex contact manifold $(\overline{M},\overline{g})$, then $\dim (M)\ge 13$ and $\dim (\overline{M})\ge 14$.
\end{proposition}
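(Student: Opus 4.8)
The plan is to read the two inequalities directly off the orthogonal decompositions (\ref{00}) and (\ref{oo}), once the rank of each summand has been pinned down. First I would record the ranks already established in the excerpt: $TM^{\perp}$ and $\mathrm{tr}(TM)$ each have rank $1$; the bundle $\{D_{1}\oplus D_{2}\}=J_{a}TM^{\perp}\oplus J_{a}\mathrm{tr}(TM)$ has rank $6$; and $\mathcal{V}=\mathrm{Span}\{U,V\}$ has rank $2$. Substituting into (\ref{00}) gives $\dim M=1+6+\dim D_{0}+2=9+\dim D_{0}$, and since (\ref{oo}) differs from (\ref{00}) only by the adjunction of the rank-one bundle $\mathrm{tr}(TM)$, one has $\dim\overline{M}=\dim M+1=10+\dim D_{0}$. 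Thus both inequalities reduce to the single claim $\dim D_{0}\ge 4$.

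To control $\dim D_{0}$ I would combine the previous proposition with Lemma \ref{lemma1}. Since $D_{0}\perp\mathcal{V}$ is invariant under each $J_{a}$ and $D_{0}$ is orthogonal to $\mathcal{V}$, for $X\in\Gamma(D_{0})$ we have $u(X)=\overline{g}(U,X)=0$ and $v(X)=\overline{g}(V,X)=0$, so (\ref{cm1}) collapses to $J_{2}^{2}=J_{3}^{2}=-I$ on $D_{0}$, while $J_{1}^{2}=-I$ holds identically. Likewise (\ref{j1}) and (\ref{j2}) restrict on $D_{0}$ to $J_{1}J_{2}=-J_{3}$, $J_{1}J_{3}=J_{2}$ and $J_{3}J_{2}=J_{1}$. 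Hence (taking, say, $i=J_{1}$, $j=J_{2}$, $k=-J_{3}$) the operators $J_{1},J_{2},J_{3}$ endow each fibre of $D_{0}$ with the structure of a module over the quaternions, whence $\dim D_{0}$ is a multiple of $4$. As $D_{0}$ is a non-degenerate (non-trivial) distribution, this forces $\dim D_{0}\ge 4$, which I would feed into the count of the first paragraph to obtain $\dim M\ge 13$ and $\dim\overline{M}\ge 14$.

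The only delicate point — and the step I expect to be the real obstacle — is the passage from ``$\dim D_{0}$ is divisible by $4$'' to ``$\dim D_{0}\ge 4$'', i.e. the exclusion of the degenerate possibility $D_{0}=\{0\}$. The quaternionic invariance alone yields $\dim D_{0}\in\{0,4,8,\dots\}$, so one must argue that $D_{0}$ is genuinely present; this is exactly where the non-degeneracy and non-triviality of the screen complement $D_{0}$ must be invoked, and I would make that hypothesis explicit in the statement or in the line establishing the bound. Everything else is pure bookkeeping with the ranks of the summands in (\ref{00}) and (\ref{oo}), so once $D_{0}\neq\{0\}$ is secured, the two estimates follow at once.
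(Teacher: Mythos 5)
Your proposal is correct and takes essentially the same route as the paper, which in fact offers no argument at all beyond the phrase ``From the decompositions (\ref{00}) and (\ref{oo})'': one counts ranks ($1+6+2$ for $TM^{\perp}$, $D_{1}\oplus D_{2}$ and $\mathcal{V}$, so $\dim M=9+\operatorname{rank}D_{0}$) and then needs $\operatorname{rank}D_{0}\ge 4$. Your quaternionic-module argument for $4\mid\dim D_{0}$ is exactly the structure the paper sets up (Lemma \ref{lemma1} together with the $J_{a}$-invariance of $D_{0}\perp\mathcal{V}$, which forces $J_{a}D_{0}=D_{0}$ since $J_{a}$ kills or preserves $\mathcal{V}$) but never spells out. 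The ``delicate point'' you flag, excluding $D_{0}=\{0\}$, is a genuine issue --- but it is the paper's gap, not yours: nothing in the stated hypotheses (index $4q$ with $0<q<n$) rules out $n=2$, $q=1$, in which case the horizontal bundle has rank $8$, the known summands exhaust it, $D_{0}$ is trivial, and $\dim M=9$; the paper silently relies on $D_{0}$ in (\ref{e10}) being a distribution of positive rank. So your accounting is, if anything, more careful than the paper's own, and your suggestion to make the nontriviality of $D_{0}$ an explicit hypothesis is the honest way to state the result.
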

\noindent Next, let us set  
\begin{align}\label{e11}
V_{a}=-J_{a}\xi\;\;\;\; \mbox{and} \;\;\;\; U_{a}=J_{a}N,\;\;\; \forall\,a=1,2,3.	
\end{align}
Let us consider the distribution $D=\{TM^{\perp}\perp D_{1}\}\perp D_{0}$, and denote by  $S,R$ the projection morphisms of $TM$ onto $D$ and $D_{2}$, respectively. Then, any $X\in \Gamma(TM)$ can be written as 
\begin{align}\label{n1}
	X=SX+\sum_{a=1}^{3}u_{a}(X)U_{a}+u(X)U+v(X)V, 
\end{align}
where $u_{a}(X)$ are 1-forms on $M$ locally defined by 
\begin{align}\label{n2}
	u_{a}(X)=g(X,V_{a}),\;\;\;\;\forall\, a=1,2,3.
\end{align}
Applying $J_{a}$ to (\ref{n1}) leads to 
\begin{align}\label{n3}
	J_{a}X=\phi_{a}X+u_{a}(X)N+u(X)J_{a}U+v(X)J_{a}V,
\end{align}
where $\phi_{a}X:=JSX$. It follows from (\ref{n3}) that 
\begin{align}\label{n4}
	\phi_{a}U=\phi_{a}V=0,\;\;\;\forall\, a=1,2,3.
\end{align} 
On the other hand, from (\ref{n2}), (\ref{n3}), (\ref{cm1}) and (\ref{cm2}), we have $u_{a}(\phi_{a}X)=g(\phi_{a}X,V_{a})=\overline{g}(J_{a}X,V_{a})-u(X)\overline{g}(J_{a}U,V_{a})-v(X)\overline{g}(J_{a}V,V_{a})=0$, for any $a=1,2,3$. Also, $u(\phi_{a}X)=g(\phi_{a}X,U)=\overline{g}(J_{a}X,U)-v(X)\overline{g}(J_{a}V,U)=-\overline{g}(X,J_{a}U)+v(X)\overline{g}(V,J_{a}U)=0$, for any $a=1,2,3$. In a similar way, we have $v(\phi_{a}X)=0$. Therefore, we have 
\begin{align}\label{n6}
	u_{a}\circ \phi_{a}=0,\;\;\; u\circ \phi_{a}=v\circ \phi_{a}=0,\;\;\forall\, a=1,2,3.
\end{align}
Applying $J$ to (\ref{n3}) and using (\ref{n6}), we have 
\begin{align}\label{n7}
	J_{a}^{2}X=\phi_{a}^{2}X-u_{a}(X)U_{a}+u(X)J_{a}^{2}U+v(X)J_{a}^{2}V.
\end{align}
Now, for $a=1$, we have $J_{1}=J$, and relation (\ref{n7}) gives $J^{2}X=\phi_{1}^{2}X-u_{1}(X)U_{1}+u(X)J^{2}U+v(X)J^{2}V$. Since $J^{2}=-I$,  the previous gives 
\begin{align}\label{n8}
	\phi_{1}^{2}X=-X+u_{1}(X)U_{1}+u(X)U+v(X)V,\;\;u_{1}(U_{1})=1.
\end{align}
On the other hand, when $a=2$, we have $J_{2}=G$. Since $J_{2}U=GU=0$ and $J_{2}V=GV=-GJU=-HU=-J_{3}U=0$ (see Lemma \ref{lemma1}), then, (\ref{n7}) gives $G^{2}X=\phi_{2}^{2}X-u_{2}(X)U_{2}$. Now, applying (\ref{cm1}) to this relation we get 
\begin{align}\label{n9}
	\phi_{2}^{2}X=-X+u_{2}(X)U_{2}+u(X)U+v(X)V,\;\;u_{2}(U_{2})=1.
\end{align}
In a similar way, we have 
\begin{align}\label{n10}
	\phi_{3}^{2}X=-X+u_{3}(X)U_{3}+u(X)U+v(X)V,\;\;u_{3}(U_{3})=1.
\end{align}
Now, for all $a\ne b$, we have 
\begin{align}
	u_{a}(U_{b})&=0,\label{n11}\\
	\mbox{and}\;\;\;\;\ \phi_{a}U_{b}&=J_{a}U_{b}-u_{a}(U_{b})N-u(U_{b})J_{a}U-v(U_{b})J_{a}V\nonumber\\
	&=J_{a}U_{b}=-J_{a}J_{b}N=-J_{c}N=U_{c},\label{n12}
\end{align}
in which we have used (\ref{n2}), (\ref{n3}) and Lemma \ref{lemma1}. Furthermore, using (\ref{n2}), (\ref{n3}) and Lemma \ref{lemma1}, we have 
\begin{align}\label{n13}
	(u_{a}\circ \phi_{b})X=u_{a}(\phi_{b}X)=\overline{g}(J_{b}X,V_{a})=\overline{g}(X,J_{b}J_{a}\xi)=u_{c}(X).
\end{align}
Finally, using (\ref{n3}), (\ref{n6}) and Lemma \ref{lemma1}, we have 
\begin{align}\label{n14}
	(\phi_{a}\circ \phi_{b}-U_{a}\otimes u_{b})X&=J_{c}X-u_{c}(X)N-u(X)J_{c}U-v(X)J_{c}V\nonumber\\
	&=\phi_{c}X,\;\;\;\; \forall\,a\ne b.
\end{align}
Putting all the relations (\ref{n2})--(\ref{n14}) together, we have the following result.
\begin{proposition}\label{prop1}
	On a null hypersurface $(M,g)$, tangent to the characteristic subbundle $\mathcal{V}$, of an indefinite complex contact manifold $\overline{M}$, the following holds
	\begin{align}
		\phi_{a}^{2}=-I&+u_{a}\otimes U_{a}+u\otimes U+v\otimes V,\nonumber\\
		u_{a}(U_{a})&=1,\;\; u(U)=1,\;\;v(V)=1,\nonumber\\
		u_{a}\circ \phi_{a}&=0,\;\;u\circ \phi_{a}=0,\;\;
		v\circ \phi_{a}=0,\nonumber\\
		\phi_{a}U_{a}&=0,\;\;\phi_{a}U=0,\;\;\phi_{a}V=0,\nonumber\\
		&\;\;\;\;\;\;\;\mbox{and}\;\;\forall\; a\ne b;\nonumber\\
		u_{a}(U_{b})&=0,\;\;\;\phi_{a}U_{b}=U_{c},\;\; u_{a}\circ \phi_{b}=u_{c}\nonumber\\
		&\;\;\phi_{a}\circ \phi_{b}=\phi_{c}+u_{b}\otimes U_{a}.\nonumber
	\end{align}
\end{proposition}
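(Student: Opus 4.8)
The plan is to assemble all the listed identities from a single computational engine, the decomposition formula (\ref{n3}), together with the almost-product splitting of $T\overline{M}$ recorded in (\ref{oo}) and the algebraic relations of Lemma \ref{lemma1}. Recall that (\ref{n3}) reads $J_aX=\phi_aX+u_a(X)N+u(X)J_aU+v(X)J_aV$; it measures exactly the failure of the induced tensor $\phi_a$ to coincide with $J_a$ on $TM$, and it is the source of every correction term appearing in the statement. The strategy is thus uniform: each relation is obtained either by feeding a distinguished vector ($U$, $V$, $U_a$ or $U_b$) into (\ref{n3}), or by applying a second structure $J_a$ to (\ref{n3}) and resolving the output back into its screen, null, and transversal components using the normalisations $\overline{g}(\xi,N)=1$ and the arrangement $\mathcal{V}\subset S(TM)$.

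First I would dispose of the \emph{diagonal} block $a=b$. Applying $J_a$ to (\ref{n3}) and using the annihilation relations (\ref{n6}) to kill the cross terms yields (\ref{n7}). For $a=1$ one uses $J^{2}=-I$ to get (\ref{n8}); for $a=2,3$ the summands $J_a^{2}U$, $J_a^{2}V$ drop out because $J_2,J_3$ annihilate $U$ and $V$ by Lemma \ref{lemma1}, and (\ref{cm1}) then produces (\ref{n9})--(\ref{n10}). This is precisely $\phi_a^{2}=-I+u_a\otimes U_a+u\otimes U+v\otimes V$ with $u_a(U_a)=1$, while $u(U)=v(V)=1$ follow from (\ref{cm3}) and the fact that $U,V$ are orthonormal. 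The annihilation identities $u_a\circ\phi_a=u\circ\phi_a=v\circ\phi_a=0$ and $\phi_aU=\phi_aV=0$ are (\ref{n4}) and (\ref{n6}), coming from the definitions $u_a(X)=g(X,V_a)$, $u(X)=\overline{g}(U,X)$ and skew-symmetry of $J_a$. To obtain $\phi_aU_a=0$ I would substitute $U_a=J_aN$ into (\ref{n3}): since $J_aU_a=J_a^{2}N=-N$ and $u(U_a)=v(U_a)=0$ because $U,V\in S(TM)$ while $N\in\Gamma(\mathrm{tr}(TM))$, comparing the transversal components forces $\phi_aU_a=0$.

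Next I would treat the \emph{off-diagonal} block $a\ne b$ by feeding the mixed vectors through the modified quaternionic products of Lemma \ref{lemma1}. For $u_a(U_b)=0$ and $\phi_aU_b=U_c$ one substitutes $U_b$ into (\ref{n3}) and evaluates $J_aU_b=J_aJ_bN$ via Lemma \ref{lemma1}, as in (\ref{n11})--(\ref{n12}). For $u_a\circ\phi_b=u_c$ the identity $u_a(\phi_bX)=\overline{g}(J_bX,V_a)=\overline{g}(X,J_bJ_a\xi)$ of (\ref{n13}) reduces the claim to the product rule for the $J$'s. Finally, the composition law $\phi_a\circ\phi_b=\phi_c+u_b\otimes U_a$ follows by applying $J_a$ to the tangential expression for $\phi_bX$ and reprojecting through (\ref{n3}) and (\ref{n13}); the correction term $u_b\otimes U_a$ records the contribution of the transversal part $u_b(X)N$ of $\phi_bX$, which $J_a$ carries into the $U_a$-direction, as in (\ref{n14}).

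I expect the genuine obstacle to be sign and index bookkeeping, not conceptual difficulty. As Lemma \ref{lemma1} shows, $\{J_1,J_2,J_3\}$ does \emph{not} satisfy the standard quaternion relations: already $J_1J_2=-J_3$, and the product (\ref{j2}) carries the inhomogeneous correction $u\otimes V-v\otimes U$, so $J_3J_2\neq J_1$ exactly. This is simultaneously the origin of the extra summands $u\otimes U$ and $v\otimes V$ in $\phi_a^{2}$ and of the correction $u_b\otimes U_a$ in the composition rule, so each off-diagonal computation must be checked against these inhomogeneous terms and against the precise signs of the products $J_aJ_b=\pm J_c$ rather than trusted to naive quaternion algebra. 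The point that makes the calculation close cleanly — which I would verify once and reuse throughout — is that the correction terms of (\ref{j2}) act only on $\xi$ and $N$, where $u(\xi)=v(\xi)=u(N)=v(N)=0$ by the choice $\mathcal{V}\subset S(TM)$; hence they contribute nothing to the relevant contractions and the stated cyclic relations survive intact.
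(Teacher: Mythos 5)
Your proposal is correct and follows essentially the same route as the paper: the paper also takes the decomposition $J_aX=\phi_aX+u_a(X)N+u(X)J_aU+v(X)J_aV$ of (\ref{n3}) as the engine, derives the diagonal relations by applying $J_a$ and using (\ref{n6})--(\ref{n7}) together with $J^2=-I$, (\ref{cm1}) and the annihilation of $U,V$ by $G,H$, and handles the off-diagonal block exactly via (\ref{n11})--(\ref{n14}) and the modified product rules of Lemma \ref{lemma1}. Your two additions --- the explicit tangential/transversal comparison giving $\phi_aU_a=0$ (which the paper states but never derives) and the observation that the inhomogeneous corrections $u\otimes V-v\otimes U$ in (\ref{j2}) vanish on $\xi$ and $N$ --- are faithful completions of the paper's implicit steps, including its sign conventions, rather than a different argument.
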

\noindent Consider  non-zero vector-valued functions $\omega_{a}$, $a=1,2,3$, on a neighbourhood $\mathcal{U}\subset M$. Let $V_{a}'=\omega_{a}V_{a}$, it follows that $U_{a}'=(1/\omega_{a})U_{a}$,  $u_{a}'=u_{a}\circ\omega_{a}$, $v_{a}'=v_{a}\circ (1/\omega_{a})$. Also, let $\omega_{a}U'=U$, $\omega_{a}V'=V$. Then, $u'=u\circ \omega_{a}$ and $v'=v\circ \omega_{a}$. Let us define  $\phi_{a}'$ by $\omega_{a}\circ \phi'_{a}=\phi_{a}\circ \omega_{a}$. Then, applying $\phi'$ from the right hand side of each side in this relation and using the previous relations, we have 
\begin{align}
	\omega_{a}\circ {\phi_{a}'}^{2}&=\phi_{a}\circ (\omega_{a}\circ \phi_{a}^{'})=\phi_{a}^{2}\circ \omega_{a}\nonumber\\
	&=(-I+u_{a}\otimes U_{a}+u\otimes U+v\otimes V)\circ \omega_{a}\nonumber\\
	&=-\omega_{a}+u_{a}'\otimes U_{a}+u'\otimes U+v'\otimes V\nonumber\\
	&=\omega_{a}\circ (-I+u_{a}'\otimes U_{a}'+u'\otimes U'+v'\otimes V'),\nonumber
\end{align}
from which we get ${\phi_{a}'}^{2}=-I+u_{a}'\otimes U_{a}'+u'\otimes U'+v'\otimes V'$, since $\omega_{a}\ne 0$, for each $a=1,2,3$. Furthermore, $\omega_{a}\circ \phi_{a}'U_{a}'=\phi_{a}\circ \omega_{a}U_{a}'=\phi_{a}U_{a}=0$, which implies that $\phi_{a}'U_{a}'=0$. On the other hand, $\omega_{a}\circ \phi_{a}' U'=\phi_{a}\circ \omega_{a}U'=\phi_{a}U=0$. This implies that $\phi_{a}' U'=0$. In same way, we have $\phi_{a}' V'=0$. Note that $u_{a}'\circ \phi_{a}'=u'\circ \phi_{a}'=v'\circ \phi_{a}'=0$. From these calculations, we have the following corollary.
\begin{corollary}
	The induced structure $(\phi_{a},U_{a},u_{a},U,u,V,v)$, for each  $a=1,2,3$, on $(M,g)$ is not unique.
\end{corollary}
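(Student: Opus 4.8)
The plan is to establish non-uniqueness constructively, by producing an infinite family of induced structures, each satisfying every identity of Proposition \ref{prop1}, parametrised by an arbitrary triple of nowhere-vanishing functions $\omega_{a}$, $a=1,2,3$, on a coordinate neighbourhood $\mathcal{U}\subset M$. The starting observation is that the entire induced structure $(\phi_{a},U_{a},u_{a},U,u,V,v)$ is manufactured from a single local null section $\xi$ of $TM^{\perp}$ together with its uniquely determined transversal field $N$ (fixed by $\overline{g}(\xi,N)=1$): indeed $V_{a}=-J_{a}\xi$ and $U_{a}=J_{a}N$ by (\ref{e11}), the one-forms are the metric duals via (\ref{n2}), and $\phi_{a}$ is the tangential part of $J_{a}$ as in (\ref{n3}). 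Since $\xi$ is only determined up to multiplication by a nowhere-zero function, this freedom must propagate through all the derived objects, and the goal is to check that it does so compatibly.

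First I would fix the transformation law. Setting $V_{a}'=\omega_{a}V_{a}$ forces $U_{a}'=\omega_{a}^{-1}U_{a}$, so that the pairing between $V_{a}'$ and $U_{a}'$ is preserved; correspondingly $u_{a}'=\omega_{a}u_{a}$, and, after rescaling $U,V$ accordingly, the forms transform as $u'=\omega_{a}u$, $v'=\omega_{a}v$. The key definition is that of the rescaled endomorphism, namely $\phi_{a}'$ determined by $\omega_{a}\circ\phi_{a}'=\phi_{a}\circ\omega_{a}$. This conjugation is the device that makes every identity survive: applying $\phi_{a}'$ twice and invoking the unprimed relation $\phi_{a}^{2}=-I+u_{a}\otimes U_{a}+u\otimes U+v\otimes V$ from Proposition \ref{prop1} reproduces ${\phi_{a}'}^{2}=-I+u_{a}'\otimes U_{a}'+u'\otimes U'+v'\otimes V'$, exactly as carried out in the display preceding this corollary.

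Next I would verify the remaining relations for the primed structure, all of which follow mechanically once the conjugation law is in place: the annihilation identities $\phi_{a}'U_{a}'=\phi_{a}'U'=\phi_{a}'V'=0$ come straight from $\omega_{a}\circ\phi_{a}'=\phi_{a}\circ\omega_{a}$ together with the unprimed annihilations, while $u_{a}'\circ\phi_{a}'=u'\circ\phi_{a}'=v'\circ\phi_{a}'=0$ and the mixed relations for $a\ne b$ are obtained in the same way. Since $(\phi_{a}',U_{a}',u_{a}',U',u',V',v')$ then satisfies the complete list of identities of Proposition \ref{prop1} while differing from the original structure for any $\omega_{a}\not\equiv 1$, and since nowhere-vanishing functions $\omega_{a}$ on $\mathcal{U}$ exist in abundance (any constant distinct from $1$ already suffices), the induced structure cannot be unique. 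I expect the only delicate point to be the formulation and justification of the conjugation rule $\omega_{a}\circ\phi_{a}'=\phi_{a}\circ\omega_{a}$ as the correct common transformation law governing all seven pieces of data simultaneously; once this is adopted, the verification of each individual identity reduces to routine algebra.
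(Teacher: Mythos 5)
Your proposal is correct and follows essentially the same route as the paper: rescaling the structure by nowhere-vanishing functions $\omega_{a}$ (i.e.\ $V_{a}'=\omega_{a}V_{a}$, $U_{a}'=(1/\omega_{a})U_{a}$, with the forms rescaled accordingly), defining $\phi_{a}'$ through the conjugation law $\omega_{a}\circ\phi_{a}'=\phi_{a}\circ\omega_{a}$, and checking that the primed data satisfy all the identities of Proposition \ref{prop1}, which yields an abundance of distinct induced structures. The only cosmetic difference is that you motivate the rescaling by the freedom in the choice of the null section $\xi$, which the paper leaves implicit.
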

\noindent Now, let $\lambda_{a}$ be an eigenvalue of $\phi_{a}$, with respect to eigenvector $\zeta_{a}$, for all $a=1,2,3$. Then, $\phi_{a}\zeta_{a}=\lambda_{a}\zeta_{a}$. Applying $\phi_{a}$ to the previous relation and then using Proposition \ref{prop1}, we get $\lambda_{a}^{2}\zeta_{a}=\phi_{a}^{2}\zeta_{a}=-\zeta_{a}+u_{a}(\zeta_{a})U_{a}+u(\zeta_{a})U+v(\zeta_{a})V$. Applying $\phi_{a}$ to this relation gives $(\lambda_{a}^{2}+1)\lambda_{a}=0$. Therefore, we have the following corollary for each $\phi_{a}$. 
\begin{corollary}
 The eigenvalues of $\phi_{a}$, for all $a=1,2,3$, are $0$, $-i$ and  $i$. 
\end{corollary}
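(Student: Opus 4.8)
The plan is to derive a single scalar polynomial equation satisfied by every eigenvalue of $\phi_a$ directly from the algebraic identities collected in Proposition \ref{prop1}, and then read off its roots. The only ingredients required are the relation $\phi_a^2 = -I + u_a \otimes U_a + u \otimes U + v \otimes V$ together with the annihilation identities $\phi_a U_a = \phi_a U = \phi_a V = 0$; everything else is a short computation on a single eigenvector.

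First I would fix a nonzero eigenvector $\zeta_a$ satisfying $\phi_a \zeta_a = \lambda_a \zeta_a$. Applying $\phi_a$ a second time and substituting the formula for $\phi_a^2$ produces
\[ \lambda_a^2 \zeta_a = -\zeta_a + u_a(\zeta_a) U_a + u(\zeta_a) U + v(\zeta_a) V, \]
which is exactly the relation already recorded just before the statement. The decisive move is then to apply $\phi_a$ one more time to this equation: because $\phi_a$ annihilates each of $U_a$, $U$ and $V$, the three correction terms on the right vanish, leaving $\lambda_a^3 \zeta_a = -\lambda_a \zeta_a$. Since $\zeta_a \neq 0$, this collapses to the scalar identity $\lambda_a(\lambda_a^2 + 1) = 0$.

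Reading off the roots of this cubic gives $\lambda_a \in \{0, i, -i\}$, which is precisely the asserted set of eigenvalues. I would round out the argument by remarking that $0$ is a genuine eigenvalue, its eigenspace containing the three directions $U_a$, $U$ and $V$, whereas the purely imaginary values $\pm i$ arise on the complexification, reflecting the fact that $\phi_a$ restricts to an almost complex structure transverse to $\mathrm{Span}\{U_a, U, V\}$.

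As for difficulties, there is essentially no analytic obstacle here, since the argument is purely algebraic and self-contained once Proposition \ref{prop1} is in hand. The only point requiring a little care is interpretive: the values $\pm i$ must be understood as eigenvalues of the complexification of $\phi_a$ (equivalently, its real minimal polynomial divides $\lambda(\lambda^2 + 1)$), since a real operator admits no genuine imaginary eigenvalues. This is the standard convention for tensors of almost-contact type and does not affect any of the intermediate steps.
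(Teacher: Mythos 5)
Your proof is correct and follows exactly the paper's own argument: apply $\phi_{a}$ to the eigenvector relation, substitute the identity $\phi_{a}^{2}=-I+u_{a}\otimes U_{a}+u\otimes U+v\otimes V$, then apply $\phi_{a}$ once more so the correction terms vanish via $\phi_{a}U_{a}=\phi_{a}U=\phi_{a}V=0$, yielding $\lambda_{a}(\lambda_{a}^{2}+1)=0$. Your closing remark that $\pm i$ should be read as eigenvalues of the complexification is a sensible clarification the paper leaves implicit.
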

\noindent Furthermore, in view of Proposition \ref{prop1} and the definition of an almost contact $3$-structure (see \cite[p. 325]{ku}), we have the following result.
\begin{corollary}
Under the assumptions of Proposition \ref{prop1}, with $\xi$ and $N$  globally defined on $M$, $({\phi_{a}}_{|\mathcal{V}^{\perp}},U_{a},u_{a})$ defines an almost contact $3$-structure on the complement $\mathcal{V}^{\perp}$ of the vertical distribution in $TM$.
\end{corollary}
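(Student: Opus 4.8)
The plan is to show that the only obstruction to $(\phi_a,U_a,u_a)$ defining an almost contact structure on all of $TM$ --- namely the extra terms $u\otimes U + v\otimes V$ appearing in the identity $\phi_a^2 = -I + u_a\otimes U_a + u\otimes U + v\otimes V$ of Proposition \ref{prop1} --- vanishes precisely upon restriction to $\mathcal{V}^\perp$, and then to match the surviving identities of Proposition \ref{prop1} against the defining axioms of an almost contact $3$-structure in \cite[p. 325]{ku}.

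First I would record that, since $\mathcal{V}=\mathrm{Span}\{U,V\}$ is spacelike and hence non-degenerate, we have the orthogonal splitting $TM=\mathcal{V}\perp\mathcal{V}^\perp$ with $\mathcal{V}^\perp=\{X\in\Gamma(TM):u(X)=v(X)=0\}$; in particular $u$ and $v$ vanish identically on $\mathcal{V}^\perp$. Next I would check that the candidate structure genuinely lives on $\mathcal{V}^\perp$: the relations $u\circ\phi_a = v\circ\phi_a = 0$ of Proposition \ref{prop1} show that $\phi_a$ carries $\mathcal{V}^\perp$ into itself, so $\phi_a|_{\mathcal{V}^\perp}$ is a well-defined $(1,1)$-tensor there; and since $U_a=J_a N\in D_2$ while $D_2\perp\mathcal{V}$ by the orthogonal decomposition (\ref{e10}), each $U_a$ lies in $\mathcal{V}^\perp$.

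With these two observations in hand, restricting the first identity of Proposition \ref{prop1} to $\mathcal{V}^\perp$ collapses it to $\phi_a^2 = -I + u_a\otimes U_a$, which together with $u_a(U_a)=1$, $\phi_a U_a = 0$ and $u_a\circ\phi_a = 0$ (all from Proposition \ref{prop1}) exhibits each triple $(\phi_a|_{\mathcal{V}^\perp},U_a,u_a)$ as an almost contact structure. It then remains to read off the $3$-structure compatibility: for a cyclic permutation $(a,b,c)$ of $(1,2,3)$ the relations $\phi_a U_b = U_c$, $u_a\circ\phi_b = u_c$ and $\phi_a\circ\phi_b = \phi_c + u_b\otimes U_a$ of Proposition \ref{prop1} are exactly the defining identities $\xi_\gamma = \phi_\alpha\xi_\beta$, $\eta_\gamma = \eta_\alpha\circ\phi_\beta$ and $\phi_\gamma = \phi_\alpha\phi_\beta - \eta_\beta\otimes\xi_\alpha$ of an almost contact $3$-structure \cite[p. 325]{ku}, under the dictionary $\phi_\alpha\leftrightarrow\phi_a$, $\xi_\alpha\leftrightarrow U_a$, $\eta_\alpha\leftrightarrow u_a$.

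The main obstacle is conceptual rather than computational: one must recognize that the restriction to $\mathcal{V}^\perp$ is \emph{essential} and not cosmetic, because the naive structure on $TM$ fails the almost contact identity off $\mathcal{V}^\perp$, and one must confirm both that $\phi_a$ preserves $\mathcal{V}^\perp$ and that the $U_a$ belong to it. A secondary point needing care is the sign and ordering bookkeeping in the compatibility relations: Proposition \ref{prop1} states them for ``the third index'' $c$, so to align them with the anti-symmetric half of the axioms in \cite[p. 325]{ku} (namely $\phi_\gamma = -\phi_\beta\phi_\alpha + \eta_\alpha\otimes\xi_\beta$, and similarly for $\xi_\gamma$ and $\eta_\gamma$) one should fix the cyclic convention for $(a,b,c)$ consistently --- a routine verification once the convention inherited from Lemma \ref{lemma1} is pinned down.
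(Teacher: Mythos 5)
Your proof is correct and takes exactly the route the paper intends: the paper gives no separate argument for this corollary, stating only that it holds ``in view of Proposition \ref{prop1} and the definition of an almost contact $3$-structure,'' and your write-up is precisely that verification made explicit (vanishing of $u$ and $v$ on $\mathcal{V}^{\perp}$, invariance of $\mathcal{V}^{\perp}$ under $\phi_{a}$ via $u\circ\phi_{a}=v\circ\phi_{a}=0$, membership $U_{a}=J_{a}N\in D_{2}\subset\mathcal{V}^{\perp}$, and then matching the surviving identities against Kuo's axioms). Your closing caveat about fixing the cyclic convention is well placed, since the paper's Proposition \ref{prop1} states the mixed identities ``for all $a\neq b$'' without tracking the signs forced by Lemma \ref{lemma1} (e.g.\ one gets $u_{2}\circ\phi_{1}=u_{3}$ but $u_{1}\circ\phi_{2}=-u_{3}$), so pinning down the ordering is genuinely needed to recover the anti-symmetric half of Kuo's definition.
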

\begin{lemma}\label{lemma2}
Let $(M,g)$ be a null hypersurface of an indefinite complex contact manifold $\overline{M}$. Then, we have 
\begin{align}
	B(X,U)&=-g(X,V_{2})-\overline{g}(h_{U}X,V_{2}),\label{e1} \\
	C(X,U)&=-g(X,U_{2})-\overline{g}(h_{U}X,U_{2}),\label{e2}\\
	B(X,V)&=-g(X,V_{3})-\overline{g}(h_{V}X,V_{3}),\label{e3}\\
	C(X,V)&=-g(X,U_{3})-\overline{g}(h_{V}X,U_{3}),\label{e4}\\
	\mbox{and}\;\;\;\sigma(X)&=g(\nabla_{X}U,V),\;\;\;\forall\, X\in \Gamma(TM).\label{e5}
\end{align}
\end{lemma}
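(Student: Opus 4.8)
The plan is to read off each of the forms $B$ and $C$ from the ambient structure equations \eqref{cm4} and \eqref{cm5} by projecting along $\xi$ and along $N$ respectively. The two extraction identities I would set up first are
$B(X,Y)=\overline{g}(\overline{\nabla}_{X}Y,\xi)$ and $C(X,PY)=\overline{g}(\overline{\nabla}_{X}PY,N)$,
both of which fall out of the Gauss--Weingarten equations \eqref{flow6}--\eqref{flow7}: contracting $\overline{\nabla}_{X}Y=\nabla_{X}Y+B(X,Y)N$ with $\xi$ annihilates $\nabla_{X}Y$ (since $\nabla_{X}Y\in\Gamma(TM)$ while $\xi\in\Gamma(TM^{\perp})$) and leaves $B(X,Y)\,\overline{g}(N,\xi)=B(X,Y)$; while contracting $\nabla_{X}PY=\nabla^{*}_{X}PY+C(X,PY)\xi$ with $N$ kills the screen part and leaves $C(X,PY)$, after which the Gauss formula and $\overline{g}(N,N)=0$ give the stated form. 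Since $U,V\in\Gamma(\mathcal{V})\subset\Gamma(S(TM))$, we have $PU=U$ and $PV=V$, so both identities apply directly to $U$ and $V$.

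For \eqref{e1} and \eqref{e2} I would put $Y=U$ and invoke \eqref{cm4}, namely $\overline{\nabla}_{X}U=-GX-Gh_{U}X+\sigma(X)V$, then project onto $\xi$ and onto $N$. The $\sigma(X)V$ term drops out in both projections because $V\in\Gamma(S(TM))$ forces $\overline{g}(V,\xi)=\overline{g}(V,N)=0$. The surviving terms are then reduced using the skew-symmetry $\overline{g}(GZ,W)=-\overline{g}(Z,GW)$ from \eqref{cm2}, together with $G\xi=J_{2}\xi=-V_{2}$ and $GN=J_{2}N=U_{2}$ from \eqref{e11}; this rewrites $\overline{g}(GX,\xi)$ and $\overline{g}(Gh_{U}X,\xi)$ in terms of $g(X,V_{2})$ and $\overline{g}(h_{U}X,V_{2})$, and similarly the $N$-projection in terms of $U_{2}$. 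Relations \eqref{e3} and \eqref{e4} are proved identically, now starting from \eqref{cm5}, $\overline{\nabla}_{X}V=-HX-Hh_{V}X-\sigma(X)U$: here the $\sigma(X)U$ term vanishes since $\overline{g}(U,\xi)=\overline{g}(U,N)=0$, while $H$ is skew (Lemma \ref{lemma1}) with $H\xi=J_{3}\xi=-V_{3}$ and $HN=J_{3}N=U_{3}$.

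For the final identity \eqref{e5} I would again use \eqref{cm4}, but now project along $V$. The point is that the two tensorial terms now vanish for a different reason: $GV=0$ (by Lemma \ref{lemma1}, since $V=-JU$ and $GU=0$), so skew-symmetry gives $\overline{g}(GX,V)=-\overline{g}(X,GV)=0$ and likewise $\overline{g}(Gh_{U}X,V)=0$, leaving $\overline{g}(\overline{\nabla}_{X}U,V)=\sigma(X)\,\overline{g}(V,V)=\sigma(X)$; matching this against $\overline{\nabla}_{X}U=\nabla_{X}U+B(X,U)N$ with $\overline{g}(N,V)=0$ yields $\sigma(X)=g(\nabla_{X}U,V)$. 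The main obstacle I anticipate is not conceptual but the sign bookkeeping: every contribution must be tracked through the skew-symmetry of $G$ and $H$ combined with the conventions $V_{a}=-J_{a}\xi$ and $U_{a}=J_{a}N$, and each vanishing inner product must be justified from $\mathcal{V}\subset S(TM)$ together with the defining relations $\overline{g}(\xi,N)=1$ and $\overline{g}(N,N)=\overline{g}(N,Z)=0$ for $Z\in\Gamma(S(TM))$.
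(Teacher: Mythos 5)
Your proposal is correct and follows essentially the same route as the paper's own proof: write $\overline{\nabla}_{X}U$ via \eqref{cm4} (resp.\ $\overline{\nabla}_{X}V$ via \eqref{cm5}), contract the Gauss--Weingarten decompositions with $\xi$, $N$ and $V$, and simplify using the skew-symmetry of $G$, $H$ and the conventions $V_{a}=-J_{a}\xi$, $U_{a}=J_{a}N$. One remark: carried to the end, the $N$-projection (in your argument and in the paper's own computation alike) actually yields $C(X,U)=+g(X,U_{2})+\overline{g}(h_{U}X,U_{2})$ because $U_{2}=GN$, so the minus signs printed in \eqref{e2} and \eqref{e4} are a sign slip inherited from the paper, not a defect introduced by your proof.
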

\begin{proof}
	In view of (\ref{cm4}) and first relation of (\ref{flow6}), we have 
	\begin{align}\label{e6}
		\nabla_{X}U+B(X,U)N=-GX-Gh_{U}X+\sigma(X)V,
	\end{align}
	for any $X\in \Gamma(TM)$. The inner product of (\ref{e6}) with $\xi$ and $N$ in turns gives 
	\begin{align}
		B(X,U)&=-\overline{g}(GX,\xi)-\overline{g}(Gh_{U}X,\xi)\label{e7}\\
	\mbox{and}\;\;\;\overline{g}(\nabla_{X}U,N)&=-\overline{g}(GX,N)-\overline{g}(Gh_{U}X,N).\label{e8}
	\end{align}
	Then, applying (\ref{cm2}), we get $B(X,U)=\overline{g}(X,G\xi)+\overline{g}(h_{U}X,G\xi)$, which implies (\ref{e1}). On the other hand, (\ref{cm2}), (\ref{flow7}) and (\ref{e8}) gives $C(X,U)=\overline{g}(X,GN)+\overline{g}(h_{U}X,GN)$, which proves (\ref{e2}). Relations (\ref{e3}) and (\ref{e4}) follows easily as in (\ref{e1}) and (\ref{e2}), while considering (\ref{cm5}), (\ref{flow6}), (\ref{flow7}) and (\ref{cm2}). Finally, (\ref{e5}) follows from (\ref{e6}), (\ref{cm2}) and (\ref{cm3}), which completes the proof.
\end{proof}
A null hypersurface $(M,g)$ of a semi-Riemannian manifold $(\overline{M},\overline{g})$ is called; \textit{totally umbilic} \cite[p. 106]{db} if and only if, on each coordinate neighbourhood $\mathcal{U}$ of $M$  there exist a smooth function $\rho$ such that $A_{\xi}^{*}X=\rho PX$, or equivalently, $B(X,PY)=\rho g(X,Y)$, for all $X,Y\in \Gamma(TM)$. In case $\rho=0$, we say that $M$ is \textit{totally geodesic} otherwise it is  \textit{proper} totally umbilic. In the same line, $M$ is \textit{screen totally umbilic}  \cite[p. 109]{db} if and only if, on each coordinate neighbourhood $\mathcal{U}$ of $M$ there exist a smooth function $\varrho$  such that $A_{N}X=\varrho PX$, or equivalently, $C(X,PY)=\varrho g(X,Y)$, for all $X,Y\in \Gamma(TM)$. In case $\varrho=0$,  we say that $M$ is \textit{screen totally geodesic} otherwise it is \textit{proper} screen totally umbilic. Furthermore, $M$ is \textit {screen locally conformal} \cite[p. 179]{ds2} if and only if, on any coordinate neighbourhood $\mathcal{U}$ there exist a non-vanishing smooth function $\psi$ such that $A_{N}X=\psi A_{\xi}^{*}X$, or equivalently, $C(X,PY)=\psi B(X,Y)$, for any $X,Y\in \Gamma(TM)$. The conformality is said to be global if $\mathcal{U}=M$. In the sequel, by screen conformal we shall mean screen locally conformal.

Next, in view of Lemma \ref{lemma2}, we have the following characterization result.
\begin{theorem}\label{main2}
	A normal indefinite complex contact manifold $\overline{M}$ does not admit any totally umbilic, screen totally umbilic or screen conformal null hypersurface $(M,g)$, tangent to the characteristic subbundle $\mathcal{V}$.
\end{theorem}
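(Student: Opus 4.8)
The plan is to use normality to remove the $h_U$- and $h_V$-terms from Lemma~\ref{lemma2}, and then to feed the characteristic field $U$ into each of the three defining conditions and evaluate the resulting identities on a suitably chosen pair of null screen vectors. Since the hypersurface is tangent to $\mathcal{V}$ we have arranged $\mathcal{V}\subset S(TM)$, so $PU=U$ and $U$ is a legitimate test direction. Normality gives, via the remark recorded after the definition of a normal structure, that $h_U=0$ for every section of $\mathcal{V}$, hence $h_U=h_V=0$; substituting this into (\ref{e1}) and (\ref{e2}) collapses Lemma~\ref{lemma2} to the clean relations $B(X,U)=-g(X,V_2)=-u_2(X)$ and $C(X,U)=-g(X,U_2)$, valid for all $X\in\Gamma(TM)$.

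First I would record the few inner products that power every contradiction. From the definition (\ref{e11}) and Proposition~\ref{prop1} we have $g(U_2,V_2)=u_2(U_2)=1$, while (\ref{cm1})--(\ref{cm3}), the skew-symmetry of $G$ and the relation $GU=0$, together with the facts that $\xi$ and $N$ are null and $\overline{g}$-orthogonal to $\mathcal{V}$, give $u(U_2)=u(V_2)=0$ and $g(U_2,U_2)=g(V_2,V_2)=0$. Thus $U_2=J_2N$ and $V_2=-J_2\xi$ form a dual pair of null screen vectors, each orthogonal to $U$ but pairing to $1$ with one another.

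Then the three cases are one-line evaluations. For the totally umbilic case, putting $Y=U$ in $B(X,PY)=\rho\,g(X,Y)$ gives $B(X,U)=\rho\,g(X,U)=\rho\,u(X)$; comparing with $B(X,U)=-u_2(X)$ and evaluating at $X=U_2$ yields $-1=\rho\,u(U_2)=0$. For the screen totally umbilic case, putting $Y=U$ in $C(X,PY)=\varrho\,g(X,Y)$ gives $C(X,U)=\varrho\,u(X)$; comparing with $C(X,U)=-g(X,U_2)$ and evaluating at $X=V_2$ yields $-1=\varrho\,u(V_2)=0$. For the screen conformal case, putting $Y=U$ in $C(X,PY)=\psi\,B(X,Y)$ gives $-g(X,U_2)=-\psi\,g(X,V_2)$; evaluating at $X=U_2$ and using $g(U_2,U_2)=0$, $g(U_2,V_2)=1$ forces $\psi=0$, contradicting that $\psi$ is non-vanishing. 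Each case is thus absurd, so no such hypersurface can exist.

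The conceptual core, and the only place that needs genuine care, is the second paragraph: one must verify that the distribution $D_2$ furnishes a null vector $U_2$ that pairs non-trivially with $V_2$ yet is orthogonal to $U$, which is precisely where both hypotheses enter --- tangency to $\mathcal{V}$ puts $U$ in the screen so that $B(\cdot,U)$ and $C(\cdot,U)$ are controlled by Lemma~\ref{lemma2}, and normality kills $h_U,h_V$ so that these second fundamental forms are pinned exactly to $u_2$ and $g(\cdot,U_2)$. I would not expect any obstacle beyond this bookkeeping; note in particular that the argument is insensitive to the sign of $u_2(U_2)$, since a non-zero constant equated to $0$ is contradictory either way.
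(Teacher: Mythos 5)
Your proposal is correct and follows essentially the same route as the paper: normality kills $h_U,h_V$ in Lemma~\ref{lemma2}, then feeding $U$ into each umbilicity/conformality condition and testing against the null pair $U_2,V_2$ (the paper additionally runs the parallel argument with $V$, $U_3$, $V_3$, which is redundant) yields the same contradictions. The only cosmetic difference is in the screen conformal case, where the paper evaluates at $X=V_2$ to get $0=1$ directly while you evaluate at $X=U_2$ to force $\psi=0$ against its non-vanishing; both are valid, and your remark that the argument is insensitive to the sign of $u_2(U_2)$ is a worthwhile observation given the paper's own sign conventions for $V_a=-J_a\xi$ and $U_a=J_aN$.
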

\begin{proof}
	As $\overline{M}$ is normal, we have $h_{U}=0$, for any $U\in \Gamma(\mathcal{V})$. Now, assume that $M$ is totally umbilic, then (\ref{e1}) and (\ref{e3}) implies $\rho g(X,U)=-g(X,V_{2})$ and $\rho g(X,V)=-g(X,V_{3})$, for all $X\in \Gamma(TM)$ and $U,V\in \Gamma(\mathcal{V})$. Setting $X=U_{2}$ and $X=U_{3}$ in the first and second of the previous relations and noting that $U_{2},U_{3}\perp \mathcal{V}$ (see decomposition \ref{e10}), we, respectively, get $-g(U_{2},V_{2})=0$ and $-g(U_{3},V_{3})=0$, which are both contradictions. Therefore, $M$ is never totally umbilic. On the other hand, if $M$ is screen totally geodesic, (\ref{e2}) and (\ref{e4}) leads to $\varrho g(X,U)=-g(X,U_{2})$ and $\varrho g(X,V)=-g(X,U_{3})$, for all $X\in \Gamma(TM)$. Letting $X=V_{2}$ and $X=V_{3}$ in the first and second relations gives $-g(V_{2},U_{2}=0$ and $-g(V_{3},U_{3})=0$, which are both contradictions. Hence, $M$ is never totally umbilic in $\overline{M}$. Finally, assume that $M$ is screen conformal, then (\ref{e10}) and (\ref{e2}) leads to $\psi g(X,V_{2})=g(X,U_{2})$, while (\ref{e3}) and (\ref{e4})  gives $\psi g(X,V_{3})=g(X,U_{3})$, for all $X\in \Gamma(TM)$. Setting $X=V_{2}$ in the first one and $X=V_{3}$ in the second, while noting that $V_{2}$ and $V_{3}$ are both null vector fields, gives $g(V_{2},U_{2})=0$ and $g(V_{3},U_{3})=0$. These are contradictions, and hence $M$ is never screen conformal in $\overline{M}$, which completes the proof.
\end{proof}
According to \cite[p. 89]{db}, $S(TM)$ is parallel with respect to $\nabla$ if $\nabla_{X}PY\in \Gamma(S(TM))$, for all $X,Y\in \Gamma(TM)$. It then follows from (\ref{flow7}) that $C(X,PY)=0$, for all $X,Y\in \Gamma(TM)$. That is, $S(TM)$ is totally geodesic. Furthermore, we see from (\ref{p40}) that the induced connection is a metric connection if and only if $B=0$, i.e., $M$ is totally geodesic. In fact, assume that $\nabla$ is a metric connection, then (\ref{p40}) implies that $B(X,Z)\eta(Y)+B(X,Y)\eta(Z)=0$, for all $X,Y,Z\in \Gamma(TM)$, since $\nabla g=0$. Hence, setting $Z=\xi$ in this relation and using the fact that $B(X,\xi)=0$, for all $X\in \Gamma(TM)$ and $\xi \in \Gamma(TM^{\perp})$, we get $B(X,Y)=0$, for all $X,Y\in \Gamma(TM)$. The converse is obvious. The normal distribution $TM^{\perp}$ is said to be killing if $\pounds_{\xi}g=0$, for all $\xi\in \Gamma(TM^{\perp})$. Here, $\pounds_{\xi}$ denotes the usual Lie derivative with respect to $\xi$. By a simple calculation, while using (\ref{p40}) and the second relation of (\ref{flow7}), this is equivalent to $2B=0$, i.e. $M$ totally geodesic.

Putting all the above details to Theorem \ref{main2}, we have the following.
\begin{corollary}
	In view of Theorem \ref{main2}, we see that 
	\begin{enumerate}
		\item $S(TM)$ is never parallel,
		\item $\nabla$ is never a metric connection,
		\item $TM^{\perp}$ is never a killing distribution,
	\end{enumerate}
	on any null hypersurface of a normal indefinite complex contact manifold $\overline{M}$. 
\end{corollary}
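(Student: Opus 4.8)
The plan is to read the corollary as an immediate consequence of Theorem \ref{main2} combined with the three equivalences established in the paragraph directly preceding the statement. The essential observation is that each of the three geometric conditions listed in the corollary has already been shown to be equivalent to a (screen) totally geodesic condition, and that being (screen) totally geodesic is nothing but the vanishing-coefficient special case of being (screen) totally umbilic, a class of hypersurfaces that Theorem \ref{main2} has already excluded.

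For item (2), I would recall that $\nabla$ is a metric connection precisely when $B=0$, i.e. when $M$ is totally geodesic; this is the case $\rho=0$ of the totally umbilic relation $B(X,PY)=\rho\, g(X,Y)$. Since Theorem \ref{main2} forbids every totally umbilic null hypersurface tangent to $\mathcal{V}$ --- and the totally geodesic ones in particular --- no such $M$ can carry a metric induced connection. Item (3) is handled identically: the normal distribution $TM^{\perp}$ is killing exactly when $2B=0$, which again forces $M$ to be totally geodesic and is therefore impossible by the same reasoning.

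For item (1), I would invoke the equivalence $S(TM)$ parallel $\iff C(X,PY)=0 \iff M$ screen totally geodesic. This is the case $\varrho=0$ of the screen totally umbilic relation $C(X,PY)=\varrho\, g(X,Y)$, and Theorem \ref{main2} rules out any screen totally umbilic null hypersurface tangent to $\mathcal{V}$, hence in particular any screen totally geodesic one. Therefore $S(TM)$ can never be parallel.

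The one point that genuinely needs care --- and where I would concentrate the write-up --- is to confirm that the non-existence asserted in Theorem \ref{main2} really does cover the degenerate values $\rho=0$ and $\varrho=0$, rather than only the proper umbilic case. Inspecting that proof, the contradictions $g(U_{2},V_{2})=0$ and $g(U_{3},V_{3})=0$ (together with their screen analogues) are obtained by evaluating the umbilicity relations on the fields $U_{a}$ and $V_{a}$, and at no stage do they use that $\rho$ or $\varrho$ is nonzero. Since $g(U_{a},V_{a})=-\overline{g}(J_{a}\xi,J_{a}N)=-1\neq 0$, the contradiction persists verbatim when $\rho=\varrho=0$. Thus the totally geodesic and screen totally geodesic cases are genuinely excluded, and the three assertions of the corollary follow at once.
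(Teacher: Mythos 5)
Your proposal is correct and follows essentially the same route as the paper: the paragraph preceding the corollary establishes exactly the equivalences you cite (screen parallel $\Leftrightarrow$ $C(X,PY)=0$, metric connection $\Leftrightarrow$ $B=0$, killing $TM^{\perp}$ $\Leftrightarrow$ $2B=0$), and the paper then concludes by applying Theorem \ref{main2}, whose definitions of (screen) totally umbilic include the geodesic cases $\rho=0$ and $\varrho=0$. Your extra check that the contradictions $g(U_{a},V_{a})=-1\neq 0$ in the proof of Theorem \ref{main2} never use $\rho\neq 0$ or $\varrho\neq 0$ is a sound (and worthwhile) verification, but it does not change the argument.
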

\begin{lemma}\label{lemma3}
	For a null hypersurface $(M,g)$ of a normal indefinite complex contact manifold $\overline{M}$, the following holds
	\begin{align}
	C(X,V_{1})=B(X,&U_{1})-u(X)d\sigma(\xi,U_{2})-v(X)d\sigma(\xi,U_{3}),\label{h1}\\
		C(X,V_{2})&=B(X,U_{2})+v(X)d\sigma(V_{2},U_{2}),\label{e25}\\
		\mbox{and}\;\;\;\;\;C(X,&V_{3})=B(X,U_{3})-u(X)d\sigma(V_{3},U_{3}),\label{e26}
	\end{align}
	for all $X\in \Gamma(TM)$.
\end{lemma}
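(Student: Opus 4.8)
The plan is to derive each of \eqref{h1}--\eqref{e26} by writing both $C(X,V_a)$ and $B(X,U_a)$ as ambient covariant derivatives paired against the null fields $N$ and $\xi$, and then transferring the operators $J_a$ onto those fixed null fields so that the normality relations \eqref{m1}, \eqref{e20}, \eqref{e21} for $\overline{\nabla}J_a$ can be invoked. Recall from \eqref{e11} that $V_a=-J_a\xi$ and $U_a=J_aN$, and that $V_a,U_a\in\Gamma(S(TM))$. First I would record two elementary consequences of the Gauss--Weingarten equations \eqref{flow6}, \eqref{flow7}: since $V_a$ is orthogonal to $N$ and $U_a$ is orthogonal to $\xi$, differentiating $\overline{g}(V_a,N)=0$ and $\overline{g}(U_a,\xi)=0$ and using $B(X,\xi)=0$ gives
\begin{align}
C(X,V_a)=\overline{g}(\overline{\nabla}_XV_a,N)\quad\text{and}\quad B(X,U_a)=\overline{g}(\overline{\nabla}_XU_a,\xi),\nonumber
\end{align}
for all $X\in\Gamma(TM)$; the two second fundamental forms thus become plain ambient derivatives tested against the complementary null vector.

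Next I would expand $\overline{\nabla}_XV_a=-(\overline{\nabla}_XJ_a)\xi-J_a\overline{\nabla}_X\xi$, substitute $\overline{\nabla}_X\xi=-A^{*}_{\xi}X-\tau(X)\xi$ (from \eqref{flow7} together with $B(X,\xi)=0$), and pair with $N$. Using the skew-symmetry of each $J_a$ with respect to $\overline{g}$ (which holds for $J_1=J$ by \eqref{cm2} and for $J_2=G$, $J_3=H$ by Lemma \ref{lemma1}), the contribution of $J_a\overline{\nabla}_X\xi$ collapses to a multiple of $\overline{g}(A^{*}_{\xi}X,U_a)=B(X,U_a)$, while the term $\tau(X)J_a\xi=\tau(X)(-V_a)$ is annihilated by $N$. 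This isolates a single remaining contribution built from $(\overline{\nabla}_XJ_a)$ paired with $(\xi,N)$, so that $C(X,V_a)$ and $B(X,U_a)$ differ exactly by $\overline{g}((\overline{\nabla}_XJ_a)N,\xi)$ (here I would use the skew-symmetry $\overline{g}((\overline{\nabla}_XJ_a)N,\xi)=-\overline{g}((\overline{\nabla}_XJ_a)\xi,N)$ of the tensor $\overline{\nabla}_XJ_a$ to pass to whichever pairing is most convenient).

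It then remains to evaluate $\overline{g}((\overline{\nabla}_XJ_a)N,\xi)$ for $a=1,2,3$ from \eqref{m1}, \eqref{e20}, \eqref{e21} with $Y=N$, $Z=\xi$. This is the step requiring the most care, and it is where nearly everything cancels: because $U,V\in\Gamma(S(TM))$ one has $u(\xi)=v(\xi)=u(N)=v(N)=0$, which kills every term carrying a factor $u(\cdot)$ or $v(\cdot)$ evaluated on $\xi$ or $N$; and because $J_b\xi,J_bN$ lie in the screen distribution, all the purely metric terms such as $\overline{g}(G N,\xi)$, $\overline{g}(HN,\xi)$ vanish. The mixed products $J_bJ_cN$ that appear (for instance $GHN$ and $HGN$) are reduced by Lemma \ref{lemma1} to multiples of $JN=U_1$, which again pair trivially with $\xi$. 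What survives is a single $d\sigma$ contribution in each case: for $a=1$ the terms $u(X)d\sigma(\xi,U_2)+v(X)d\sigma(\xi,U_3)$ appear directly, while for $a=2$ and $a=3$ the lone survivors are $d\sigma(G\xi,GN)$ and $d\sigma(H\xi,HN)$, which upon using $G\xi=-V_2$, $H\xi=-V_3$, $GN=U_2$, $HN=U_3$ become $\mp d\sigma(V_2,U_2)$ and $\mp d\sigma(V_3,U_3)$; collecting signs yields \eqref{h1}--\eqref{e26}.

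The main obstacle is genuinely the bookkeeping in this last step rather than anything conceptual: one must expand the long right-hand sides of \eqref{m1}, \eqref{e20}, \eqref{e21} in full and justify term by term that everything except the advertised $d\sigma$ contribution cancels, taking care that each cancellation uses both the null character of $\xi,N$ and their orthogonality to $S(TM)$. A secondary subtlety is the propagation of signs through the convention $V_a=-J_a\xi$, the skew-symmetry of the $J_a$, and the antisymmetry of $d\sigma$; as an internal consistency check I would verify that the three resulting identities are compatible with the skew-symmetry of $\overline{\nabla}_XJ_a$ noted above, which gives an independent handle on the correct sign of each $d\sigma$ term.
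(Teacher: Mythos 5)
Your proposal takes essentially the same route as the paper's own proof: both arguments evaluate $\overline{g}((\overline{\nabla}_{X}J_{a})N,\xi)$ in two ways --- once from the normality formulas (\ref{m1}), (\ref{e20}), (\ref{e21}) with $Y=N$, $Z=\xi$ (your term-by-term vanishing argument, namely that $u,v$ vanish on $\xi,N$, that the metric terms die because $J_{b}\xi$, $J_{b}N$ are screen-valued, and that Lemma \ref{lemma1} reduces $HGN$, $GHN$ to multiples of $U_{1}$, is exactly what the paper does to obtain (\ref{m2}) and (\ref{e22})) --- and once through the Gauss--Weingarten equations. Your detour through $V_{a}=-J_{a}\xi$ followed by skew-symmetry of $\overline{\nabla}_{X}J_{a}$ is equivalent to the paper's direct expansion of $(\overline{\nabla}_{X}J_{a})N$.

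There is, however, one concrete point where your sketch fails, and it is precisely the sign you deferred to the bookkeeping stage. With the paper's definition (\ref{e11}), $U_{a}=J_{a}N$ and $V_{a}=-J_{a}\xi$, the collapse of the $J_{a}\overline{\nabla}_{X}\xi$ term gives
\begin{align}
-\overline{g}(J_{a}\overline{\nabla}_{X}\xi,N)=\overline{g}(\overline{\nabla}_{X}\xi,J_{a}N)
=\overline{g}(-A^{*}_{\xi}X-\tau(X)\xi,U_{a})=-B(X,U_{a}),\nonumber
\end{align}
so $C(X,V_{a})$ and $B(X,U_{a})$ do not \emph{differ} by $\overline{g}((\overline{\nabla}_{X}J_{a})N,\xi)$ as you assert; they \emph{sum} to it:
\begin{align}
C(X,V_{a})+B(X,U_{a})=\overline{g}((\overline{\nabla}_{X}J_{a})N,\xi).\nonumber
\end{align}
Since the normality side evaluates (using $GN=U_{2}$, $HN=U_{3}$, $G\xi=-V_{2}$, $H\xi=-V_{3}$) to $u(X)d\sigma(\xi,U_{2})+v(X)d\sigma(\xi,U_{3})$ for $a=1$, to $-v(X)d\sigma(V_{2},U_{2})$ for $a=2$, and to $+u(X)d\sigma(V_{3},U_{3})$ for $a=3$, collecting signs honestly produces (\ref{h1})--(\ref{e26}) with every right-hand side negated, not the identities as stated. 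The disagreement is not a flaw in your strategy but a sign inconsistency in the paper itself: the paper's proof reaches the printed signs only through two compensating slips (in (\ref{m2}) it writes $d\sigma(\xi,GN)=-d\sigma(\xi,U_{2})$, and just below it replaces $\overline{g}(\overline{\nabla}_{X}JN,\xi)$ by $-\overline{g}(\overline{\nabla}_{X}U_{1},\xi)$, both contradicting $U_{a}=J_{a}N$). Indeed, the lemma as printed is exactly what your computation yields if one instead defines $U_{a}=-J_{a}N$, which is also the convention needed for the paper's later use of $g(U_{2},V_{2})=1$; so to complete your proof you must either negate the right-hand sides of the conclusion or flip the sign convention in (\ref{e11}).
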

\begin{proof}
Setting $Y=N$ and $Z=\xi$ in (\ref{m1}) and applying Lemma \ref{lemma1}, we get 
\begin{align}\label{m2}
	\overline{g}((\overline{\nabla}_{X}J)N,\xi)&=u(X) d\sigma(\xi,GN)+v(X) d\sigma(\xi,HN) \nonumber\\
	&=-u(X) d\sigma(\xi,U_{2})-v(X) d\sigma(\xi,U_{3}), 
\end{align}
for all $X\in \Gamma(T\overline{M})$. With the help of (\ref{e11}), the left hand side of (\ref{m2}) gives 
\begin{align}
	\overline{g}((\overline{\nabla}_{X}J)N,\xi)&=\overline{g}(\overline{\nabla}_{X}JN,\xi)-\overline{g}(J\overline{\nabla}_{X}N,\xi)\nonumber\\
	&= -\overline{g}(\overline{\nabla}_{X}U_{1},\xi)-\overline{g}(\overline{\nabla}_{X}N,V_{1}).\label{m3}
\end{align}
Now, for all $X\in \Gamma(TM)$, (\ref{m3}) simplifies to 
\begin{align}\label{m4}
	\overline{g}((\overline{\nabla}_{X}J)N,\xi)=-B(X,U_{1})+C(X,V_{1}).
\end{align}
Thus, (\ref{h1}) follows from (\ref{m4}) and (\ref{m2}). On the other hand,  setting $Y=N$ and $Z=\xi$ in (\ref{e20}) and using Lemma \ref{lemma1}, we get 
\begin{align}\label{e22}
		\overline{g}((\overline{\nabla}_{X}G)N,\xi)=v(X)d\sigma(G\xi,GN)=v(X)d\sigma(V_{2},U_{2}),
	\end{align}
for all $X\in \Gamma(T\overline{M})$. Simplifying the left hand of (\ref{e22}), we see that 
\begin{align}
	\overline{g}((\overline{\nabla}_{X}G)N,\xi)&=\overline{g}(\overline{\nabla}_{X}GN,\xi)-\overline{g}(G\overline{\nabla}_{X}N,\xi)\nonumber\\
	&= -\overline{g}(\overline{\nabla}_{X}U_{2},\xi)-\overline{g}(\overline{\nabla}_{X}N,V_{2}),\label{e23}
\end{align}
for $X\in \Gamma(T\overline{M})$, in which we have used (\ref{cm2}). For all $X\in \Gamma(TM)$, (\ref{e23}), (\ref{flow6}) and (\ref{flow7}) gives 
\begin{align}\label{e24}
	\overline{g}((\overline{\nabla}_{X}G)N,\xi)=-B(X,U_{2})+C(X,V_{2}).
\end{align}
Then, (\ref{e25}) follows immediately from (\ref{e22}) and (\ref{e24}). Lastly, relation (\ref{e26}) follows by similar calculations, while using (\ref{e21}), (\ref{flow6}) and  (\ref{flow7}).
\end{proof}
With the aid of Lemma \ref{lemma3}, we have the following result.
\begin{theorem}
	The transversal bundle $\mathrm{tr}(TM)$ of a null hypersurface $(M,g)$, tangent to the characteristic subbundle $\mathcal{V}$, of a normal indefinite complex contact manifold $\overline{M}$ is never a killing distribution.
\end{theorem}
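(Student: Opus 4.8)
The plan is to mirror the treatment of $TM^{\perp}$ given earlier, where $TM^{\perp}$ killing was reduced to $2B=0$. By a killing transversal bundle I mean that every section $N$ of $\mathrm{tr}(TM)$ is a Killing field, i.e. $\pounds_{N}\overline{g}=0$; note that here one must use $\overline{g}$ rather than the induced $g$, since $N$ is not tangent to $M$. Because $\overline{\nabla}$ is metric and torsion-free, for all $X,Y\in\Gamma(TM)$ one has $(\pounds_{N}\overline{g})(X,Y)=\overline{g}(\overline{\nabla}_{X}N,Y)+\overline{g}(\overline{\nabla}_{Y}N,X)$. First I would substitute the Weingarten equation $\overline{\nabla}_{X}N=-A_{N}X+\tau(X)N$ from (\ref{flow6}), and use that $A_{N}$ is screen-valued with $g(A_{N}X,PY)=C(X,PY)$ together with $\overline{g}(N,Y)=\eta(Y)$, to obtain $(\pounds_{N}\overline{g})(X,Y)=-C(X,PY)-C(Y,PX)+\tau(X)\eta(Y)+\tau(Y)\eta(X)$. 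Thus $\mathrm{tr}(TM)$ is killing exactly when this (manifestly symmetric) expression vanishes identically.

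The strategy is then to exhibit a single pair of tangent vectors on which this expression is nonzero, forcing a contradiction as in Theorem \ref{main2}. I would test it on $X=V_{2}=-G\xi$ and $Y=U$, where $V_{2}=-J_{2}\xi$, $U_{2}=J_{2}N$ with $J_{2}=G$ as in (\ref{e11}). Both $V_{2}\in GTM^{\perp}\subset D_{1}$ and $U\in\mathcal{V}$ lie in $S(TM)$, so $PV_{2}=V_{2}$, $PU=U$ and $\eta(V_{2})=\eta(U)=0$; hence the $\tau$-terms drop and $(\pounds_{N}\overline{g})(V_{2},U)=-C(V_{2},U)-C(U,V_{2})$. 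The two orderings of $C$ must now be computed separately, which is the crux of the argument since $C$ is not symmetric. For the first I would use (\ref{e2}) of Lemma \ref{lemma2}, which for normal $\overline{M}$ (so $h_{U}=0$) gives $C(V_{2},U)=-g(V_{2},U_{2})$. For the reversed ordering I would first apply (\ref{e25}) of Lemma \ref{lemma3}; since $v(U)=\overline{g}(V,U)=0$ the $d\sigma$-term vanishes and $C(U,V_{2})=B(U,U_{2})$, and then (\ref{e1}) of Lemma \ref{lemma2} together with the symmetry of $B$ yields $B(U,U_{2})=B(U_{2},U)=-g(U_{2},V_{2})$.

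It remains to compute the pairing $g(U_{2},V_{2})$, which is where the non-degeneracy of the construction enters. With $U_{2}=GN$ and $V_{2}=-G\xi$, the skew-symmetry of $G$ from (\ref{cm2}), the identity $G^{2}=-I+u\otimes U+v\otimes V$ from (\ref{cm1}), and $u(\xi)=v(\xi)=0$ (as $\xi\perp\mathcal{V}$, whence $G^{2}\xi=-\xi$), give $g(U_{2},V_{2})=-\overline{g}(GN,G\xi)=\overline{g}(N,G^{2}\xi)=-\overline{g}(N,\xi)=-1$. Feeding this back, $C(V_{2},U)=1$ and $C(U,V_{2})=1$, so $(\pounds_{N}\overline{g})(V_{2},U)=-2\neq0$. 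Hence $\pounds_{N}\overline{g}\neq0$ and $\mathrm{tr}(TM)$ can never be killing. The main obstacle is precisely the asymmetry of $C$: one cannot collapse $C(V_{2},U)$ and $C(U,V_{2})$ into a single term, so Lemma \ref{lemma3} (and not Lemma \ref{lemma2} alone) is indispensable for the reversed ordering, and the whole contradiction rests on the fact that $g(U_{2},V_{2})=-1\neq0$.
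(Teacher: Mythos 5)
Your proposal is correct and follows essentially the same route as the paper: both reduce the killing condition to the symmetrized relation $C(X,PY)+C(Y,PX)=\tau(X)\eta(Y)+\tau(Y)\eta(X)$, test it on the pair $(U,V_{2})$, and compute the two orderings of $C$ from (\ref{e2}) on one hand and from (\ref{e25}) together with (\ref{e1}) and the symmetry of $B$ on the other, so that the contradiction rests on $g(U_{2},V_{2})\neq 0$. The only (immaterial) difference is that you evaluate $g(U_{2},V_{2})=-1$ directly from the definitions, whereas the paper cites $u_{2}(U_{2})=1$ from Proposition \ref{prop1}; since the two orderings yield values of opposite sign in either convention, the contradiction survives either way.
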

\begin{proof}
	Suppose that $\mathrm{tr}(TM)$ is a killing distribution. Then, $\pounds_{N}\overline{g}=0$, for all $N\in \Gamma(\mathrm{tr}(TM))$.  This implies that 
	\begin{align}\label{e27}
		\overline{g}(\overline{\nabla}_{X}N,Y)+\overline{g}(X,\overline{\nabla}_{Y}N)=0,
	\end{align}
	for all $X,Y\in \Gamma(T\overline{M})$. For all $X,Y\in \Gamma(TM)$, (\ref{e27}) and (\ref{flow6}) gives 
	\begin{align}\label{e28}
		C(X,PY)+C(Y,PX)=\tau(X)\eta(Y)+\tau(Y)\eta(X).
	\end{align}
	Setting $Y=U$ in (\ref{e28}) and using (\ref{e2}), we get $C(U,PX)=g(X,U_{2})+\tau(U)\eta(X)$. And, for $X=V_{2}$ in the previous relation we have $C(U,V_{2})=1$. On the other hand, (\ref{e25}) and (\ref{e1}) implies that $C(U,V_{2})=B(U,U_{2})=B(U_{2},U)=-g(U_{2},V_{2})=-1$, which contradicts the previous relation. Hence, the transversal bundle is never a killing a distribution.
\end{proof}
\section{Main results}\label{mainn}
We have seen  (see Theorem \ref{main2}) that an indefinite complex contact manifold does not admit any totally umbilic, screen totally umbilic or screen conformal null hypersurface, tangent to the characteristic subbundle $\mathcal{V}$. It is easy to see that the well-known definitions of the above mentioned null hypersurfaces fails in portions of $TM$ which includes the vector fields $V,U$  spanning  $\mathcal{V}$. However, these definitions can be confined to $\mathcal{V}^{\perp}:=TM^{\perp}\perp\{D_{1}\oplus D_{2}\}\perp D_{0}$, which is the complementary distribution of $\mathcal{V}$ in $TM$ (see decomposition (\ref{00}) of $TM$). Such considerations give rise to totally contact umbilic, totally contact screen umbilic and contact screen conformal null hypersurfaces (see \cite{ssekajja}) in case the ambient manifold is an indefinite Sasakian manifold. In the same way, if we let $\tilde{P}$ be the projection morphism of $TM$ onto $TM^{\perp}\perp\{D_{1}\oplus D_{2}\}\perp D_{0}$, then each $X\in \Gamma(TM)$ can be written as 
\begin{align}\label{le3}
	X=\tilde{P}X+u(X)U+v(X)V,
\end{align}
where $u(X)=g(U,X)$ and $v(X)=g(V,X)$. Hence, we have the following definition.
\begin{definition}\label{definition2}
\rm {
	Let $(M,g)$ be a null hypersurface, tangent to the characteristic subbundle $\mathcal{V}$, of an indefinite complex contact manifold $\overline{M}$. Then, 
	\begin{enumerate}
		\item $M$ is totally contact umbilic if and only if on each coordinate neighbourhood $\mathcal{U}|_{\mathcal{V}^{\perp}}$ of $M$, there exists a smooth function $\beta$ such that $B=\beta \otimes g$ on $\mathcal{V}^{\perp}$ or equivalently
		\begin{align}\label{v1}
			B(\tilde{P}X,\tilde{P}Y)=\beta g(\tilde{P}X,\tilde{P}Y),\;\;\;\forall \, X,Y\in \Gamma(TM).
		\end{align}
		\item $M$ is totally contact screen umbilic if and only if on each coordinate neighbourhood $\mathcal{U}|_{\mathcal{V}^{\perp}}$ of $M$, there exists a smooth function $\mu$ such that $B=\beta \otimes g$ on $\mathcal{V}^{\perp}$ or equivalently
	\begin{align}\label{v2}
			C(\tilde{P}X,P\tilde{P}Y)=\mu g(\tilde{P}X,\tilde{P}Y),\;\;\;\forall \, X,Y\in \Gamma(TM). 
		\end{align}
		In case $\mu=0$, then $M$ is called totally contact screen geodesic.
	\item $M$ is contact screen conformal if and only if on each coordinate neighbourhood $\mathcal{U}|_{\mathcal{V}^{\perp}}$ of $M$, there exists a nonzero smooth function $\varphi$  such that $C=\varphi \otimes B$ on $\mathcal{V}^{\perp}$ or equivalently			
			\begin{align}\label{v3}
			C(\tilde{P}X,P\tilde{P}Y)=\varphi B(\tilde{P}X,\tilde{P}Y),\;\;\;\forall \, X,Y\in \Gamma(TM), 
		 \end{align}
		 and $M$ is contact screen homothetic if $\varphi$  is a constant function.
	\end{enumerate}
	}
\end{definition} 
\noindent By direct calculations using (\ref{m1}), (\ref{e20}), (\ref{e21}), (\ref{flow6}) and (\ref{flow7}), we have 
\begin{align}
	g((\nabla_{X}\phi_{1})Y,V_{1})&=-B(X,Y)+u_{1}(Y)C(X,V_{1}),\label{q1}\\
	g((\nabla_{X}\phi_{2})Y,V_{2})&=-B(X,Y)+u_{2}(Y)C(X,V_{2})+\sigma(X)u_{1}(Y),\label{q2}\\
	g((\nabla_{X}\phi_{3})Y,V_{3})&=-B(X,Y)+u_{3}(Y)C(X,V_{3})+\sigma(X)u_{1}(Y),\label{q3}
\end{align}
for all $X,Y\in \Gamma(\mathcal{V}^{\perp})$. Moreover, $u_{1}$, $u_{2}$ and $u_{3}$ satisfies the following relations
\begin{align}
	(\nabla_{X}u_{1})(Y)&=-B(X,\phi_{1}Y)-u_{1}(Y)\tau(X),\\
	(\nabla_{X}u_{2})(Y)&=-B(X,\phi_{2}Y)-u_{2}(Y)\tau(X)+\sigma(X)u_{3}(Y),\\
	(\nabla_{X}u_{3})(Y)&=-B(X,\phi_{3}Y)-u_{3}(Y)\tau(X)-\sigma(X)u_{2}(Y),
\end{align}
for all $X,Y\in \Gamma(\mathcal{V}^{\perp})$. It then follows from (\ref{q1}), (\ref{q2}) and (\ref{q3}) that $B(X,U_{a})=C(X,V_{a})$ and $g((\nabla_{X}\phi_{a})Y,V_{a})=-B(X,Y)$, for all $X\in \Gamma(\mathcal{V}^{\perp})$ and $Y\in \Gamma(D)$. Therefore, we have the following result.
\begin{theorem}
	Let $\overline{M}$ be a normal indefinite  complex contact manifold, and $(M,g)$ a null hypersurface of $\overline{M}$, tangent to the characteristic subbundle $\mathcal{V}$. Then, 
	\begin{enumerate}
		\item $M$ is totally contact geodesic if and only if $C(X,V_{a})=0$ and $(\nabla_{X}\phi_{a})Y=0$, for all $X\in \Gamma(\mathcal{V}^{\perp})$ and $Y\in \Gamma(D)$.
		\item $M$ is totally contact screen geodesic if and only if $B(X,U_{a})=0$ and $\nabla_{X}PY\notin \Gamma(TM^{\perp})$, for all $X\in \Gamma(\mathcal{V}^{\perp})$ and $Y\in \Gamma(D_{2}\perp D_{0})$.	
\end{enumerate}
\end{theorem}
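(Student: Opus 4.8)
The plan is to reduce both statements to the vanishing of a single local second fundamental form on the distribution $\mathcal{V}^{\perp}$, and then to read off that vanishing along the (non-orthogonal) direct sum $\mathcal{V}^{\perp}=D\oplus D_{2}$, where $D_{2}=\mathrm{Span}\{U_{1},U_{2},U_{3}\}$ and $D_{1}=\mathrm{Span}\{V_{1},V_{2},V_{3}\}$. Setting $\beta=0$ in (\ref{v1}) shows that $M$ is totally contact geodesic exactly when $B(X,Y)=0$ for all $X,Y\in\Gamma(\mathcal{V}^{\perp})$, and setting $\mu=0$ in (\ref{v2}) shows that $M$ is totally contact screen geodesic exactly when $C(X,PY)=0$ for such $X,Y$. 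Throughout I would lean on the two bridging identities recorded just before the statement: for $X\in\Gamma(\mathcal{V}^{\perp})$ and $Y\in\Gamma(D)$,
\begin{align}
B(X,U_{a})=C(X,V_{a})\qquad\text{and}\qquad g((\nabla_{X}\phi_{a})Y,V_{a})=-B(X,Y).\nonumber
\end{align}

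For (1), the reverse implication is immediate: given $C(X,V_{a})=0$ and $(\nabla_{X}\phi_{a})Y=0$, the first identity (with $U_{a}\in D_{2}$) gives $B(X,U_{a})=0$, so $B$ vanishes on $\mathcal{V}^{\perp}\times D_{2}$, while the second gives $B(X,Y)=-g((\nabla_{X}\phi_{a})Y,V_{a})=0$ on $\mathcal{V}^{\perp}\times D$; since every $Y\in\Gamma(\mathcal{V}^{\perp})$ splits as $Y_{D}+\sum_{a}c_{a}U_{a}$, these combine to $B|_{\mathcal{V}^{\perp}\times\mathcal{V}^{\perp}}=0$, i.e. $M$ is totally contact geodesic. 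For the forward implication I would start from $B|_{\mathcal{V}^{\perp}\times\mathcal{V}^{\perp}}=0$, deduce $C(X,V_{a})=B(X,U_{a})=0$ from the first identity, and then recover $(\nabla_{X}\phi_{a})Y$ directly. Since $u(X)=v(X)=0$ for $X\in\Gamma(\mathcal{V}^{\perp})$, identity (\ref{m1}) forces $(\overline{\nabla}_{X}J)Y=0$, so expanding $\overline{\nabla}_{X}(JY)=J\,\overline{\nabla}_{X}Y$ with $JY=\phi_{1}Y+u_{1}(Y)N$ through (\ref{flow6})–(\ref{flow7}) yields an explicit formula for $(\nabla_{X}\phi_{1})Y$; the cases $a=2,3$ are handled the same way from (\ref{e20})–(\ref{e21}).

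The hard part is precisely this recovery of the \emph{full} tensor $(\nabla_{X}\phi_{a})Y$, rather than only its $V_{a}$-component supplied by (\ref{q1})–(\ref{q3}). My expansion shows that, modulo $B$, the tensor also carries a screen term $u_{a}(Y)A_{N}X$ and vertical terms along $U$ and $V$ built from $\overline{g}(Y,GX)$ and $\overline{g}(Y,HX)$, where one uses $\overline{\nabla}_{X}U=-GX+\sigma(X)V$ and $\overline{\nabla}_{X}V=-HX-\sigma(X)U$ from (\ref{cm4})–(\ref{cm5}) with $h_{U}=h_{V}=0$ by normality. The term $u_{a}(Y)A_{N}X$ disappears because $u_{a}(Y)=g(Y,V_{a})=0$ for $Y\in\Gamma(D)$ (the null vectors $V_{a}$ pair only with the $U_{a}\in D_{2}$, which lie outside $D$), so the crux is to control the residual vertical terms on $D$. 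Should they persist, the honest conclusion is that only the $V_{a}$-component of $(\nabla_{X}\phi_{a})Y$ vanishes, which is already enough to close the equivalence via the second bridging identity; I would flag this as the delicate computation underpinning the literal statement of (1).

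Statement (2) is clean and purely algebraic. Totally contact screen geodesic means $C(X,PY)=0$ for $X,Y\in\Gamma(\mathcal{V}^{\perp})$, i.e. $C(X,\cdot)=0$ on the screen part $(D_{1}\oplus D_{2})\perp D_{0}$ of $\mathcal{V}^{\perp}$. Testing against $D_{1}=\mathrm{Span}\{V_{a}\}$ and invoking the first bridging identity turns $C(X,V_{a})=0$ into $B(X,U_{a})=0$; testing against $D_{2}\perp D_{0}$ and using the screen Gauss formula $\nabla_{X}PY=\nabla^{*}_{X}PY+C(X,PY)\xi$ from (\ref{flow7}) turns $C(X,PY)=0$ into the statement that $\nabla_{X}PY$ carries no $TM^{\perp}$-component, i.e. $\nabla_{X}PY\in\Gamma(S(TM))$. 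Since $D_{1}$ and $D_{2}\perp D_{0}$ together span the screen of $\mathcal{V}^{\perp}$, the conjunction of the two right-hand conditions is equivalent to $C(X,PY)=0$ on $\mathcal{V}^{\perp}$, and I expect no essential difficulty here.
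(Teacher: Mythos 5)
You follow the same route as the paper --- necessarily so, because the paper supplies no proof of this theorem at all: it derives (\ref{q1})--(\ref{q3}), observes that for $X\in\Gamma(\mathcal{V}^{\perp})$ and $Y\in\Gamma(D)$ these reduce to the two bridging identities $B(X,U_{a})=C(X,V_{a})$ and $g((\nabla_{X}\phi_{a})Y,V_{a})=-B(X,Y)$, and then states the theorem with a bare ``Therefore.'' Your reverse implication in (1), your splitting $\mathcal{V}^{\perp}=D\oplus D_{2}$, and your reading of (2) (interpreting ``$\nabla_{X}PY\notin\Gamma(TM^{\perp})$'' as ``$\nabla_{X}PY$ has vanishing $TM^{\perp}$-component, i.e. $\nabla_{X}PY\in\Gamma(S(TM))$,'' via (\ref{flow7})) are exactly this argument made explicit, and they are correct.

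The delicate point you flag in the forward direction of (1) is a genuine gap, and it sits in the paper, not merely in your attempt: the residual vertical terms do persist. Concretely, suppose $M$ is totally contact geodesic, i.e. $B=0$ on $\mathcal{V}^{\perp}\times\mathcal{V}^{\perp}$. For $X\in\Gamma(\mathcal{V}^{\perp})$, relation (\ref{m1}) gives $(\overline{\nabla}_{X}J)Y=0$; for $Y\in\Gamma(D)$ one has $JY=\phi_{1}Y\in\Gamma(D)$, so expanding $\overline{\nabla}_{X}(\phi_{1}Y)=J\overline{\nabla}_{X}Y$ through (\ref{flow6}) and using $u(\nabla_{X}Y)=\overline{g}(GX,Y)$ and $v(\nabla_{X}Y)=\overline{g}(HX,Y)$ (the identities (\ref{le7}), (\ref{le8}), valid by normality) yields
\begin{align}
(\nabla_{X}\phi_{1})Y=\overline{g}(GX,Y)\,JU+\overline{g}(HX,Y)\,JV=\overline{g}(HX,Y)\,U-\overline{g}(GX,Y)\,V,\nonumber
\end{align}
with analogous formulas for $a=2,3$; the $N$- and $A_{N}$-terms drop exactly as you argued, but the vertical part survives. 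It is nonzero in general: taking a non-null $X\in\Gamma(D_{0})$ (possible, since $D_{0}$ is non-degenerate of positive rank under the paper's dimension restriction) and $Y=GX\in\Gamma(D_{0})\subset\Gamma(D)$ gives $\overline{g}(GX,Y)=\overline{g}(X,X)\neq0$. Hence total contact geodesy forces only $g((\nabla_{X}\phi_{a})Y,V_{a})=0$, never the full $(\nabla_{X}\phi_{a})Y=0$, so item (1) as literally stated fails in its forward implication; it is true only in the weakened form you propose, which is precisely what the bridging identities prove. Your hedge is exactly right, and your closing of the equivalence via the $V_{a}$-component is the statement the paper's argument actually supports.
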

In view of (\ref{cm6}) and (\ref{v34}), we have 
\begin{align}\label{q6}
	&\;\frac{c+3}{4}[\overline{g}(Z,JY)u_{1}(X)-\overline{g}(Z,JX)u_{1}(Y)+2\overline{g}(X,JY)u_{1}(Z)]\nonumber\\
	&+\frac{c-1}{4}[\overline{g}(Z,GY)u_{2}(X)-\overline{g}(Z,GX)u_{2}(Y)+2\overline{g}(X,GY)u_{2}(Z)\nonumber\\
	&+ \overline{g}(Z,HY)u_{3}(X)-\overline{g}(Z,HX)u_{3}(Y)+2\overline{g}(X,HY)u_{3}(Z)]\nonumber\\
	&=(\nabla_{X}B)(Y,Z)-(\nabla_{Y}B)(X,Z)+\tau(X)B(Y,Z)-\tau(Y)B(X,Z),
\end{align}
for all $X,Y,Z\in \Gamma(\mathcal{V}^{\perp})$. On the other hand, using (\ref{cm6}) and (\ref{v35}), we get 
\begin{align}\label{q7}
	\frac{c+3}{4}&[\overline{g}(Y,Z)\eta(X)-\overline{g}(X,Z)\eta(Y)+\overline{g}(Z,JY)v_{1}(X)\nonumber\\
	&-\overline{g}(Z,JX)v_{1}(Y)	+2\overline{g}(X,JY)v_{1}(Z)]+\frac{c-1}{4}[\overline{g}(Z,GY)v_{2}(X)\nonumber\\
	&-\overline{g}(Z,GX)v_{2}(Y)+2\overline{g}(X,GY)v_{2}(Z)+ \overline{g}(Z,HY)v_{3}(X)\nonumber\\
	&-\overline{g}(Z,HX)v_{3}(Y)+2\overline{g}(X,HY)v_{3}(Z)]=(\nabla_{X}C)(Y,PZ)\nonumber\\
	&-(\nabla_{Y}C)(X,PZ)-\tau(X)C(Y,PZ)+\tau(Y)C(X,PZ), 
\end{align}
for all $X,Y,Z\in \Gamma(\mathcal{V}^{\perp})$.
\begin{lemma}\label{lemma5}
	Let $(M,g)$ be a null hypersurface, tangent to the characteristic subbundle $\mathcal{V}$, of a normal indefinite complex contact manifold  $\overline{M}$. Then, if 
	\begin{enumerate}
		\item $M$ is totally contact umbilic, we have
		\begin{align}\label{le1}
			&(\nabla_{X}B)(Y,Z)-(\nabla_{Y}B)(X,Z)=(X\beta)g(Y,Z)-(Y\beta)g(X,Z)	\nonumber\\
			&\;\;\;\;\;\;\;\;\;\;\;\;+\beta^{2}[g(X,Z)\eta(Y)-g(Y,Z)\eta(X)	]+u_{2}(Z)\overline{g}(GX,Y)\nonumber\\
			&\;\;\;\;\;\;\;\;\;\;\;\;-u_{2}(Z)\overline{g}(GY,X)+u_{3}(Z)\overline{g}(HX,Y)-u_{3}(Z)\overline{g}(HY,X)\nonumber\\
			&\;\;\;\;\;\;\;\;\;\;\;\;+u_{2}(Y)\overline{g}(GX,Z)-u_{2}(X)\overline{g}(GY,Z)+u_{3}(Y)\overline{g}(HX,Z)\nonumber\\
			&\;\;\;\;\;\;\;\;\;\;\;\;-u_{3}(X)\overline{g}(HY,Z),
	\end{align}
	\item $M$ is totally contact screen umbilic, we have
	   \begin{align}\label{le2}
			&(\nabla_{X}C)(Y,PZ)-(\nabla_{Y}C)(X,PZ)=(X	\mu)g(Y,PZ)-(Y\mu)g(X,PZ)\nonumber\\
			&\;\;\;\;\;\;\;\;\;\;+\mu [B(X,Z)\eta(Y)-B(Y,Z)\eta(X)]-C(U,PZ)[\overline{g}(GX,Y)\nonumber\\
			&\;\;\;\;\;\;\;\;\;\;-\overline{g}(GY,X)]-C(V,PZ)[\overline{g}(HX,Y)-\overline{g}(HY,X)]+v_{2}(Y)\overline{g}(GX,Z)\nonumber\\
			&\;\;\;\;\;\;\;\;\;\;-v_{2}(X)\overline{g}(GY,Z)+v_{3}(Y)\overline{g}(HX,Z)-v_{3}(X)\overline{g}(HY,Z),
	\end{align}
   \end{enumerate}
   for all $X,Y,Z\in \Gamma(\mathcal{V}^{\perp})$.
\end{lemma}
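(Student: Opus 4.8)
The plan is to take the two covariant-derivative formulas for $B$ and $C$ computed from the normality relations and substitute the umbilicity/screen-umbilicity hypotheses directly, so that each part becomes a matter of simplifying the resulting tensorial expression. Concretely, for part (1) I would begin from the defining condition $B(\tilde{P}X,\tilde{P}Y)=\beta g(\tilde{P}X,\tilde{P}Y)$ and extend it to all of $\mathcal{V}^{\perp}$. Since $X,Y,Z\in\Gamma(\mathcal{V}^{\perp})$ and $\mathcal{V}^{\perp}=TM^{\perp}\perp\{D_{1}\oplus D_{2}\}\perp D_{0}$, the projection $\tilde{P}$ acts as the identity on these vectors, so $B(X,Y)=\beta g(X,Y)$ holds outright. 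I would then expand $(\nabla_{X}B)(Y,Z)$ using the definition (\ref{p100}), namely $(\nabla_{X}B)(Y,Z)=XB(Y,Z)-B(\nabla_{X}Y,Z)-B(Y,\nabla_{X}Z)$, and insert $B=\beta g$. The derivative $X(\beta g(Y,Z))$ produces the term $(X\beta)g(Y,Z)$ together with $\beta (Xg(Y,Z))$, and the latter must be recombined with the $B(\nabla_{X}Y,Z)$-type terms through the non-metricity identity (\ref{p40}), which is precisely where the $\eta$-terms $g(X,Z)\eta(Y)$ and the factor $\beta^{2}$ will enter.

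The heart of the computation is then to antisymmetrise $(\nabla_{X}B)(Y,Z)-(\nabla_{Y}B)(X,Z)$ and match it against the curvature side. Here I would invoke the Codazzi-type identity (\ref{q6}) already derived from (\ref{cm6}): its left-hand side carries exactly the $GH$-curvature contributions, and substituting $B=\beta g$ into the right-hand side $\tau(X)B(Y,Z)-\tau(Y)B(X,Z)$ kills those $\tau$-terms by symmetry of $g$. What survives from (\ref{q6}) is the $\frac{c-1}{4}$-block involving $\overline{g}(Z,GY)u_{2}(X)$ and the analogous $H$-terms; since $X,Y,Z\in\Gamma(\mathcal{V}^{\perp})\subset TM$ one has $u_{1}(X)=0$ so the $\frac{c+3}{4}$-block drops, and a short computation using $\overline{g}(Z,GY)=-\overline{g}(GZ,Y)$ from the skew-symmetry of $G$ (and likewise for $H$) rearranges the remaining six terms into the displayed combination of $u_{2}(Z)\overline{g}(GX,Y)$, $u_{3}(Z)\overline{g}(HX,Y)$ and the mixed $u_{2}(Y)\overline{g}(GX,Z)$, $u_{3}(Y)\overline{g}(HX,Z)$ pieces. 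Part (2) proceeds in strict parallel: start from $C(\tilde{P}X,P\tilde{P}Y)=\mu g(\tilde{P}X,\tilde{P}Y)$, expand $(\nabla_{X}C)(Y,PZ)$ via (\ref{p101}), and feed the result into the screen Codazzi identity (\ref{q7}). The $\frac{c+3}{4}$-block now contributes the $\overline{g}(Y,Z)\eta(X)$ terms because $C$ lives on the transversal side, and the $v_{a}$-forms replace the $u_{a}$-forms; the coefficients $C(U,PZ)$ and $C(V,PZ)$ appear when one projects the $GH$-terms whose first slot hits the characteristic directions.

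I expect the main obstacle to be bookkeeping rather than conceptual: keeping the skew-symmetry conventions for $G$ and $H$ (equation (\ref{cm2}) and Lemma \ref{lemma1}) consistent throughout the antisymmetrisation, and correctly tracking where the non-metricity identity (\ref{p40}) injects the $\eta$-weighted terms that generate $\beta^{2}$ in part (1) and the $\mu B$-terms in part (2). A subtle point is justifying that the $\frac{c+3}{4}$ horizontal block vanishes in part (1) but not in part (2): this hinges on which vector occupies the tangent slot versus the transversal slot, since $u_{1}(X)=g(X,V_{1})=0$ for $X\in\Gamma(\mathcal{V}^{\perp})$ whereas $\eta(X)=\overline{g}(N,X)$ need not vanish. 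Once these sign and projection conventions are fixed, both identities reduce to an algebraic rearrangement of the terms already supplied by (\ref{q6}) and (\ref{q7}), and no further geometric input is required.
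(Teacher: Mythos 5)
Your proposal has a genuine gap, and it lies in the central mechanism. The identities (\ref{le1}) and (\ref{le2}) are proved in the paper by a direct expansion of $(\nabla_{X}B)(Y,Z)$ and $(\nabla_{X}C)(Y,PZ)$ with \emph{no curvature input at all}: one writes $\nabla_{X}Y=\tilde{P}\nabla_{X}Y+u(\nabla_{X}Y)U+v(\nabla_{X}Y)V$ (the hypothesis (\ref{v1}) only controls $B$ on $\mathcal{V}^{\perp}$, and $\nabla_{X}Y$ does not stay in $\mathcal{V}^{\perp}$), evaluates the $\mathcal{V}$-components by Lemma \ref{lemma2} with normality ($h_{U}=h_{V}=0$), so that $B(U,Z)=-u_{2}(Z)$ and $B(V,Z)=-u_{3}(Z)$, and computes $u(\nabla_{X}Y)=\overline{g}(GX,Y)$, $v(\nabla_{X}Y)=\overline{g}(HX,Y)$ from (\ref{p40}), (\ref{cm4}), (\ref{cm5}). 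This is exactly where every $u_{2}$- and $u_{3}$-term of (\ref{le1}), and the $C(U,PZ)$, $C(V,PZ)$, $v_{a}$-terms of (\ref{le2}), come from. Your plan instead substitutes $B=\beta g$ wholesale into (\ref{p100}) --- which silently and incorrectly sets $B(\nabla_{X}Y,Z)=\beta g(\nabla_{X}Y,Z)$ --- and then tries to recover the missing terms from the Codazzi identities (\ref{q6}) and (\ref{q7}). That cannot work: (\ref{q6}) and (\ref{q7}) are written using (\ref{cm6}) and therefore presuppose that $\overline{M}$ has constant $GH$-sectional curvature $c$, a hypothesis Lemma \ref{lemma5} does not make, and the lemma's conclusion contains no $c$. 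Worse, the entire purpose of Lemma \ref{lemma5} is to be played off \emph{against} (\ref{q6}) and (\ref{q7}) in Theorem \ref{main4} to force $c=-3$; deriving the lemma from those identities would make that argument circular.

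Two of the concrete simplifications you invoke are also false. First, the $\tau$-terms do not cancel: with $B=\beta g$ one gets $\tau(X)B(Y,Z)-\tau(Y)B(X,Z)=\beta[\tau(X)g(Y,Z)-\tau(Y)g(X,Z)]$, and symmetry of $g$ is irrelevant here (take $Y=Z$ spacelike with $g(X,Y)=0$ and $\tau(X)\neq 0$ to get a nonzero value). Second, $u_{1}$ does not vanish on $\mathcal{V}^{\perp}$: by Proposition \ref{prop1}, $u_{1}(U_{1})=1$, and $U_{1}\in\Gamma(D_{2})\subset\Gamma(\mathcal{V}^{\perp})$, so the $\frac{c+3}{4}$-block of (\ref{q6}) does not drop. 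The salvageable part of your proposal is its first paragraph: expanding via (\ref{p100}) and using (\ref{p40}) to produce the $(X\beta)$- and $\beta^{2}$-terms is correct. What you must add is the decomposition (\ref{le3}) of $\nabla_{X}Y$ and $\nabla_{X}Z$, Lemma \ref{lemma2} under normality, and the relations $u(\nabla_{X}Y)=\overline{g}(GX,Y)$, $v(\nabla_{X}Y)=\overline{g}(HX,Y)$; then antisymmetrize in $X$ and $Y$ --- at which point the curvature tensor never enters, and part (2) follows by the same scheme applied to (\ref{p101}).
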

\begin{proof}
	Using (\ref{p100}), (\ref{le3}), (\ref{v1}) and Lemma \ref{lemma2}, we derive
	\begin{align}\label{le6}	
		(\nabla_{X}B)(Y,Z)&=X(B(Y,Z))-B(\tilde{P}\nabla_{X}Y,Z)-B(Y,\tilde{P}\nabla_{X}Z)\nonumber\\
		&\;\;\;\;+u(\nabla_{X}Y)u_{2}(Z)+v(\nabla_{X}Y)u_{3}(Z)+	u(\nabla_{X}Z)u_{2}(Y)\nonumber\\
		&\;\;\;\;+v(\nabla_{X}Z)u_{3}(Y)\nonumber\\
		&= (X\beta)g(Y,Z)+\beta 	[Xg(Y,Z)-g(\nabla_{X}Y,Z)-g(Y,\nabla_{X}Z)]	\nonumber\\
		&\;\;\;\;+u(\nabla_{X}Y)u_{2}(Z)+v(\nabla_{X}Y)u_{3}(Z)+	u(\nabla_{X}Z)u_{2}(Y)\nonumber\\
		&\;\;\;\;+v(\nabla_{X}Z)u_{3}(Y),
\end{align}
 for all $X,Y,Z\in \Gamma(\mathcal{V}^{\perp})$.  On the other hand, using (\ref{p40}), (\ref{cm4}) and (\ref{flow6}), we derive
 \begin{align}\label{le7}
 	u(\nabla_{X}Y)=g(\nabla_{X}Y,U)=-(\nabla_{X}g)(Y,U)-g(Y,\nabla_{X}U)=\overline{g}(GX,Y).
 \end{align}
 In the same way, we have 
  \begin{align}\label{le8}
 	v(\nabla_{X}Y)=g(\nabla_{X}Y,V)=-(\nabla_{X}g)(Y,V)-g(Y,\nabla_{X}V)=\overline{g}(HX,Y).
 \end{align} 
 In view of (\ref{le6}), (\ref{le7}), (\ref{le8}), (\ref{p40}) and (\ref{v1}),  we have 
 \begin{align}\label{le9}
 	(\nabla_{X}B)(Y,Z)&=(X\beta)g(Y,Z)+\beta^{2}[g(X,Z)\eta(Y)+g(X,Y)\eta(Z)]\nonumber\\
 	&\;\;\;\;\; +u_{2}(Z)\overline{g}(GX,Y)+u_{3}(Z)\overline{g}(HX,Y)+u_{2}(Y)\overline{g}(GX,Z)\nonumber\\
 	&\;\;\;\;\;+u_{3}(Y)\overline{g}(HX,Z),
  \end{align}
  for all $X,Y,Z\in \Gamma(\mathcal{V}^{\perp})$. Then, relation (\ref{le1})  follows from (\ref{le9}). Furthermore, using (\ref{p101}), (\ref{le3}), (\ref{v2}), (\ref{cm4}), (\ref{cm5}) and the relation $X=PX+\eta(X)\xi$, for all $X\in \Gamma(TM)$, we derive 
  \begin{align}\label{le10}
  	(\nabla_{X}C)(Y,PZ)&=(X\mu)g(Y,PZ)+\mu[(\nabla_{X}g)(Y,Z)-g(\nabla_{X}\xi,Y)\eta(Z)]\nonumber\\
  	&\;\;\;\;-C(U,PZ)\overline{g}(GX,Y)-C(V,PZ)\overline{g}(HX,Y)\nonumber\\
  	&\;\;\;\;+v_{2}(Y)\overline{g}(GX,Z)+V_{3}(Y)\overline{g}(HX,PZ),
  \end{align}
 for all $X,Y,Z\in \Gamma(\mathcal{V}^{\perp})$. Then, applying (\ref{flow7}) and (\ref{p40}) to (\ref{le10}), we get 
 \begin{align}\label{le11}
  	(\nabla_{X}C)(Y,PZ)&=(X\mu)g(Y,PZ)+\mu[B(Y,Z)\eta(X)+2B(X,Y)\eta(Z)]\nonumber\\
  	&\;\;\;\;-C(U,PZ)\overline{g}(GX,Y)-C(V,PZ)\overline{g}(HX,Y)\nonumber\\
  	&\;\;\;\;+v_{2}(Y)\overline{g}(GX,Z)+V_{3}(Y)\overline{g}(HX,PZ),
  \end{align}
 for all $X,Y,Z\in \Gamma(\mathcal{V}^{\perp})$. Finally, (\ref{le2}) follows from (\ref{le11}), which completes the proof.
 \end{proof}
\begin{theorem}\label{main4}
	Let $\overline{M}$ be a normal indefinite  complex contact manifold, and $(M,g)$ a totally contact umbilic or totally contact screen umbilic null hypersurface of $\overline{M}$, tangent to the characteristic subbundle $\mathcal{V}$. Then, $c=-3$, that is; $\overline{M}$ is a space of constant $GH$-sectional curvature $-3$.
\end{theorem}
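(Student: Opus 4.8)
The plan is to combine the two Codazzi-type identities \eqref{q6} and \eqref{q7}, which already encode the space-form curvature \eqref{cm6} together with normality, with the expressions for the Codazzi differences of $B$ and $C$ furnished by Lemma \ref{lemma5}, and then to feed in carefully chosen test vectors that annihilate every term except the one carrying the factor $c+3$. The natural probe is the non-degenerate distribution $D_{0}$ of \eqref{e10}. A short preliminary check shows that $D_{0}$ is invariant under each $J_{a}$: for $X\in\Gamma(D_{0})$ one has $\overline{g}(U,J_{a}X)=\overline{g}(V,J_{a}X)=0$ by skew-symmetry of the $J_{a}$ and Lemma \ref{lemma1}, so $J_{a}X\perp\mathcal{V}$, while invariance of $D_{0}\perp\mathcal{V}$ was already established; moreover \emph{all} of the distinguished $1$-forms $u,v,\eta,u_{a},v_{a}$ vanish identically on $D_{0}$, since $U,V,\xi,N$ and the vectors $U_{a},V_{a}$ lie in summands of \eqref{00} orthogonal to $D_{0}$. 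On $D_{0}$, therefore, the curvature contribution of \eqref{cm6} collapses to its $J_{1}$-part.

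First I would treat the totally contact umbilic case. Substituting $B=\beta\,g$ from \eqref{v1} into \eqref{q6} and invoking Lemma \ref{lemma5}(1), I take $X,Y\in\Gamma(D_{0})$ and $Z=U_{1}$. Because $u_{2}(U_{1})=u_{3}(U_{1})=0$ and $u_{a}(X)=u_{a}(Y)=0$, the entire $\tfrac{c-1}{4}$-bracket of \eqref{q6} disappears, while the $\tfrac{c+3}{4}$-bracket reduces to $2\,\overline{g}(X,JY)\,u_{1}(U_{1})$ with $u_{1}(U_{1})=1$ by Proposition \ref{prop1}. On the right-hand side every term of \eqref{le1} carries a factor $g(\cdot,U_{1})$, $\eta(\cdot)$, or $u_{a}(\cdot)$ that vanishes on $D_{0}\cup\{U_{1}\}$, and the transport terms $\tau(X)B(Y,U_{1})-\tau(Y)B(X,U_{1})$ vanish because $B(Y,U_{1})=\beta\,g(Y,U_{1})=0$. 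Thus \eqref{q6} collapses to $\tfrac{c+3}{2}\,\overline{g}(X,JY)=0$ for all $X,Y\in\Gamma(D_{0})$; choosing a non-null $Y$ and $X=JY$ gives $\overline{g}(JY,JY)=\overline{g}(Y,Y)\neq0$, forcing $c=-3$.

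For the totally contact screen umbilic case the same mechanism is applied to \eqref{q7} and Lemma \ref{lemma5}(2), but the probe must be sharper because \eqref{le2} contains the genuinely non-trivial terms $C(U,PZ)$ and $C(V,PZ)$ — indeed $C(U,V_{1})=-d\sigma(\xi,U_{2})$ by \eqref{h1} — together with the $\tau$-transport terms. Here I would take $X\in\Gamma(D_{0})$, $Y=JX$, and $Z=V_{1}$. The identities $J_{2}J_{1}=J_{3}$ and $J_{3}J_{1}=-J_{2}$ of Lemma \ref{lemma1}, combined with the skew-symmetry of $G$ and $H$, yield $\overline{g}(GX,JX)=\overline{g}(HX,JX)=0$, which annihilates the $C(U,\cdot)$ and $C(V,\cdot)$ contributions outright; and the screen-umbilic hypothesis \eqref{v2} forces $C(X,V_{1})=\mu\,g(X,V_{1})=0=C(JX,V_{1})$, so the $\tau$-terms vanish as well. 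What survives is the $\tfrac{c+3}{4}$-bracket, which reduces to $-\tfrac{c+3}{2}\,v_{1}(V_{1})\,\overline{g}(X,X)=0$; since $v_{1}(V_{1})=u_{1}(U_{1})\neq0$ and $X$ may be taken non-null, again $c=-3$.

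The main obstacle is precisely this screen case: unlike the umbilic case, the right-hand side of \eqref{le2} does \emph{not} vanish term-by-term on $D_{0}$, so the argument hinges on the algebraic coincidence $\overline{g}(GX,JX)=\overline{g}(HX,JX)=0$ (a consequence of the quaternion-like relations of Lemma \ref{lemma1}) to kill the $C(U,\cdot),C(V,\cdot)$ terms, reinforced by the screen-umbilic condition to kill the $\tau$-terms. I would also verify the two housekeeping points on which the argument silently rests: that $D_{0}$ is non-degenerate and non-trivial — guaranteed by the dimension bound $\dim(M)\ge 13$ — so that a non-null $X\in\Gamma(D_{0})$ exists, and that $U_{1},V_{1}\in\Gamma(\mathcal{V}^{\perp})$, so that \eqref{q6}, \eqref{q7} and Lemma \ref{lemma5} legitimately apply to the chosen arguments.
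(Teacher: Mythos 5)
Your proposal is correct, and it runs on the same engine as the paper's proof --- feeding Lemma \ref{lemma5} into the Codazzi identities \eqref{q6} and \eqref{q7} --- but with genuinely different test vectors, and the difference is worth recording. The paper sets $X=\xi$ and probes with the null vectors $U_a$, $V_a$: in the umbilic case $Y=Z=U_a$ makes $g(Y,Z)$ vanish and leaves $\tfrac{3}{4}(c+3)=0$ directly; in the screen umbilic case the transport term $\xi\mu-\mu\tau(\xi)$ survives, so the paper evaluates \eqref{le15} twice (at $Y=V_a,\,Z=U_a$ and at $Y=U_a,\,Z=V_a$, the latter requiring the explicit values $C(U,V_2)=B(U,U_2)=-1$ and $C(V,V_3)=-1$ from Lemmas \ref{lemma2} and \ref{lemma3}) and compares \eqref{le16} with \eqref{le17} to eliminate it. You avoid $\xi$ entirely and probe with spacelike directions of $D_0$ together with $Z=U_1$ (resp.\ $V_1$); every unwanted term then dies pointwise --- the $C(U,\cdot)$, $C(V,\cdot)$ terms through the quaternionic identities $\overline{g}(GX,JX)=\overline{g}(HX,JX)=0$, and the $\tau$-terms through the umbilicity hypotheses --- so you need neither the elimination step nor the computed values of $C(U,V_2)$, $C(V,V_3)$. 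What each approach buys: yours is cleaner in the screen case, but it hinges on $D_0\neq\{0\}$ admitting non-null sections; you cover this by citing the paper's dimension bound $\dim M\ge 13$, which the paper states without proof, and indeed if $\dim\overline{M}=10$ (the case $n=2$, $q=1$, permitted by the standing hypotheses of Section \ref{main}) a signature count in \eqref{e10} forces $D_0=\{0\}$, so your probe would have nothing to test there, whereas the paper's probe --- $\xi$, $U_a$, $V_a$ --- always exists. Note also that the paper's computation yields the by-products $\xi\beta+\beta\tau(\xi)-\beta^2=0$ and $\xi\mu-\mu\tau(\xi)=0$, the latter being reused after Theorem \ref{main4} in the screen-integrable corollary; your argument does not produce these, though the theorem as stated does not require them.
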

\begin{proof}

	Assume that $M$ is totally contact umbilic. Then, letting $X=\xi$ in (\ref{le1}) of Lemma \ref{lemma5}, we get 
	\begin{align}\label{le12}
		(\nabla_{\xi}B)(Y,Z)-(\nabla_{Y}B)(\xi,Z)&=[\xi \beta-\beta]g(Y,Z)\nonumber\\
		&-3u_{2}(Y)u_{2}(Z)-3u_{3}(Y)u_{3}(Z),
	\end{align}
	for all $X,Y,Z\in \Gamma(\mathcal{V}^{\perp})$.	Then letting $X=\xi$ in (\ref{q6}) and then using (\ref{le12}), we get 
	\begin{align}\label{le13}
		[\xi \beta+\beta \tau(\xi)-\beta^{2}]g(Y,Z)=\frac{3}{4}(c+3)\sum_{a=1}^{3}u_{a}(Y)u_{a}(Z),
	\end{align}
	for all $X,Y,Z\in \Gamma(\mathcal{V}^{\perp})$. Letting $Y=Z=U_{a}$, $a=1,2,3$, we get $c=-3$. Moreover, $\xi \beta+\beta \tau(\xi)-\beta^{2}=0$. On the other hand, if $M$ is totally contact screen umbilic, we let $X=\xi$	 in (\ref{le2}) and get 
	\begin{align}\label{le14}
		(\nabla_{\xi}C)(Y,PZ)&-(\nabla_{Y}C)(\xi,PZ)=(\xi \mu)g(Y,PZ)-\mu B(Y,Z)\nonumber\\
		&\;\;+2u_{2}(Y)C(U,PZ)+2u_{3}(Y)C(V,PZ)\nonumber\\
		&\;\;\;\;-v_{2}(Y)u_{2}(Z)-v_{3}(Y)u_{3}(PZ),
	\end{align}
	for all $X,Y,Z\in \Gamma(\mathcal{V}^{\perp})$. Using (\ref{le14}) in (\ref{q7}), with $X=\xi$, we get 
	\begin{align}\label{le15}
		&[\xi \mu -\mu \tau(\xi)]g(Y,PZ)-\mu B(Y,Z)=-2u_{2}(Y)C(U,PZ)-2u_{3}(Y)C(V,PZ)\nonumber\\
		&\;\;\;\;\;\;\;\;\;\;\;+\frac{c+3}{4}[g(Y,Z)-u_{1}(Z)v_{1}(Y)+2u_{1}(Y)v_{1}(Z)+v_{2}(Y)u_{2}(Z)\nonumber\\
		&\;\;\;\;\;\;\;\;\;\;\;\;\;\;\;\;\;\;\;+v_{3}(Y)u_{3}(Z)]+\frac{c-1}{2}[u_{2}(Y)v_{2}(Z)+u_{3}(Y)v_{3}(Z)],
	\end{align}
	for all $X,Y,Z\in \Gamma(\mathcal{V}^{\perp})$. Letting $Y=V_{a}$ and $Z=U_{a}$, $a=1,2,3$, in (\ref{le15}) and noting, from Lemma \ref{lemma3} and (\ref{v2}),  that $B(V_{a},U_{a})=C(V_{a},V_{a})=0$, we get 
	\begin{align}\label{le16}
		\xi \mu-\mu \tau(\xi)=\frac{1}{2}(c+3).
	\end{align}
	On the other hand, using Lemmas \ref{lemma2} and \ref{lemma3}, we get $C(U,V_{2})=B(U,U_{2})=B(U_{2},U)=-1$ and $C(V,V_{3})=B(V,U_{3})=B(U_{3},V)=-1$. Thus, letting $Y=U_{a}$ and $Z=V_{a}$, $a=1,2,3$, in (\ref{le15}), we get 
\begin{align}\label{le17}
		\xi \mu-\mu \tau(\xi)=\frac{3}{4}(c+3).
	\end{align}
	Therefore, from (\ref{le16}) and (\ref{le17}), we get $c=-3$ and $\xi \mu-\mu \tau(\xi)=0$, which completes the proof.
\end{proof}
\noindent From Theorem \ref{main4}, we have the following.
\begin{corollary}
	A normal indefinite complex contact manifold $\overline{M}$ with $c\ne -3$ does not admit any totally contact umbilic or totally contact screen umbilic null hypersurface $(M,g)$, tangent to the characteristic subbundle $\mathcal{V}$. 
\end{corollary}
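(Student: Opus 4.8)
The plan is to read this corollary as the contrapositive of Theorem \ref{main4}, so that no new computation is needed. I would argue by contradiction. Suppose $\overline{M}$ is a normal indefinite complex contact manifold of constant $GH$-sectional curvature $c\ne-3$, and suppose, toward a contradiction, that it nevertheless admits a null hypersurface $(M,g)$, tangent to the characteristic subbundle $\mathcal{V}$, that is either totally contact umbilic or totally contact screen umbilic in the sense of Definition \ref{definition2}. The hypotheses of Theorem \ref{main4} are then met verbatim: $\overline{M}$ is normal, $M$ is tangent to $\mathcal{V}$, and one of the two contact umbilicity conditions holds. Hence Theorem \ref{main4} applies and yields $c=-3$, contradicting the standing assumption $c\ne-3$. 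Therefore no such hypersurface exists, which is precisely the assertion of the corollary.

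The one point deserving care is that the umbilicity notions invoked here are the \emph{contact} ones of Definition \ref{definition2}, formulated through the projection $\tilde{P}$ onto $\mathcal{V}^{\perp}$, rather than the unrestricted umbilicity ruled out in Theorem \ref{main2}; the corollary must be read under these same contact conventions, and it is. I do not expect any genuine obstacle, since all of the analytic content has already been discharged inside Theorem \ref{main4}: the specialization of the Gauss--Codazzi relations \eqref{q6} and \eqref{q7} to the space-form curvature \eqref{cm6}, together with the contractions of the resulting identities along the null vector fields $U_{a}$ and $V_{a}$, is exactly what collapses everything to the single equation $c+3=0$. The corollary is simply this conclusion restated in contrapositive form, so its proof is a one-line logical deduction; if one preferred a self-contained argument, the hardest part would merely be re-running those same null-direction contractions, which is routine given Lemmas \ref{lemma2}, \ref{lemma3}, and \ref{lemma5}.
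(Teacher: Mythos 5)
Your proposal is correct and matches the paper exactly: the paper offers no separate argument, stating only ``From Theorem \ref{main4}, we have the following,'' so the corollary is indeed just the contrapositive of Theorem \ref{main4}. Your additional remark distinguishing the contact umbilicity notions of Definition \ref{definition2} from the unrestricted ones ruled out in Theorem \ref{main2} is a sound and appropriate reading of the hypotheses.
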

\noindent In view of Lemma \ref{lemma3} and Theorem \ref{main4}, we also have the following.
\begin{corollary}
	Any null hypersurface of $\overline{M}$ which is both totally umbilic and contact screen umbilic is totally contact geodesic and contact screen geodesic, that is $\beta=\mu =0$.
\end{corollary}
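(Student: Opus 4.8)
The plan is to pit the two umbilicity hypotheses against one another through the comparison between the second fundamental forms $B$ and $C$ recorded in Lemma~\ref{lemma3}, tested on the null vector fields $U_a$ and $V_a$. Since $M$ is assumed both totally contact umbilic (relation (\ref{v1}), with function $\beta$) and totally contact screen umbilic (relation (\ref{v2}), with function $\mu$), Theorem~\ref{main4} already pins the ambient curvature to $c=-3$, so existence is not the issue; the task is to show $\beta\equiv 0$ and $\mu\equiv 0$.

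First I would restrict every identity to $X\in\Gamma(\mathcal{V}^{\perp})$. For such $X$ one has $u(X)=v(X)=0$, because $\mathcal{V}\perp\mathcal{V}^{\perp}$ in the decomposition (\ref{00}). Feeding this into Lemma~\ref{lemma3} kills all the $d\sigma$ correction terms in (\ref{h1})--(\ref{e26}), leaving the clean comparison
\begin{align}
C(X,V_a)=B(X,U_a),\qquad \forall\,X\in\Gamma(\mathcal{V}^{\perp}),\ a=1,2,3.\nonumber
\end{align}
Since $U_a,V_a\in\Gamma(S(TM))\cap\Gamma(\mathcal{V}^{\perp})$, so that $\tilde{P}U_a=U_a$, $\tilde{P}V_a=V_a$ and $P\tilde{P}V_a=V_a$, the defining relations (\ref{v1}) and (\ref{v2}) read $B(X,U_a)=\beta\,g(X,U_a)$ and $C(X,V_a)=\mu\,g(X,V_a)$. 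Substituting into the comparison gives the single scalar equation
\begin{align}
\mu\,g(X,V_a)=\beta\,g(X,U_a),\qquad \forall\,X\in\Gamma(\mathcal{V}^{\perp}).\nonumber
\end{align}

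To conclude, I would specialise $X$ to the null fields themselves. Because $\xi$ and $N$ are null and $u(\xi)=v(\xi)=u(N)=v(N)=0$, relation (\ref{cm1}) gives $J_a^{2}\xi=-\xi$ and $J_a^{2}N=-N$, whence $g(U_a,U_a)=g(V_a,V_a)=0$, while $g(U_a,V_a)=-\overline{g}(J_a\xi,J_aN)=-1\neq 0$. Choosing $X=U_a$ collapses the scalar equation to $\mu\,g(U_a,V_a)=\beta\,g(U_a,U_a)=0$, forcing $\mu=0$; choosing $X=V_a$ gives $\mu\,g(V_a,V_a)=\beta\,g(V_a,U_a)$, i.e. $\beta\,g(U_a,V_a)=0$, forcing $\beta=0$. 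Hence $M$ is simultaneously totally contact geodesic and totally contact screen geodesic.

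The inner-product evaluations $g(U_a,U_a)=g(V_a,V_a)=0$ and $g(U_a,V_a)\neq 0$ are routine consequences of the skew-symmetry of the $J_a$ and $\overline{g}(\xi,N)=1$. The step I expect to be the main obstacle is the bookkeeping behind the reduction of Lemma~\ref{lemma3}: one must confirm, via the decomposition (\ref{e10}), that $U_a$ and $V_a$ genuinely lie in $\mathcal{V}^{\perp}$ (so that $u$, $v$ annihilate them and the $d\sigma$ terms vanish) and that the screen-umbilic relation (\ref{v2}) legitimately applies to the pair $(X,V_a)$ with $P\tilde{P}V_a=V_a$. Once this is secured, the vanishing of both $\beta$ and $\mu$ is immediate.
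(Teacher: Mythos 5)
Your proposal is correct and takes essentially the same route as the paper: the corollary is stated there as an immediate consequence of Lemma \ref{lemma3} (which on $\mathcal{V}^{\perp}$ reduces to $C(X,V_{a})=B(X,U_{a})$, since $u(X)=v(X)=0$ kills the $d\sigma$ terms) combined with the two umbilicity conditions evaluated on the null pairs $(U_{a},V_{a})$, which is exactly your computation. The only remark is that the precise value $g(U_{a},V_{a})=-1$ is immaterial (and the paper itself is not sign-consistent about it); all that is needed is $g(U_{a},U_{a})=g(V_{a},V_{a})=0$ together with $g(U_{a},V_{a})\neq 0$, which your argument correctly isolates.
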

Unlike $B$, the local second fundamental form $C$ is not, generally, symmetric on $S(TM)$. In fact, by a direct calculation, we have $C(X,Y)-C(Y,X)=\eta([X,Y])$, for all $X,Y\in \Gamma(S(TM))$. It the follows from the above relation that $C$ is symmetric on $S(TM)$ if and only if $S(TM)$ is integrable. Now, assume that $S(TM)$ is integrable, then 
\begin{align}
	&C(U,PZ)=C(PZ,U)=-v_{2}(PZ),\label{n1}\\
	\mbox{and}\;\;\;\;\;& C(V,PZ)=C(PZ,V)=-v_{3}(PZ),\label{n2}
\end{align}
for any $Z\in \Gamma(TM)$, in which we have used Lemma \ref{lemma2}. On the other hand, for a totally contact screen umbilic null hypersurface $M$, we have seen that $c=-3$ and $\xi \mu-\mu \tau(\xi)=0$. Considering these relations in (\ref{le15}), together with (\ref{n1}) and (\ref{n2}), we get 
\begin{align}\label{n3}
	-\mu B(Y,PZ)=0,\;\;\;  \forall\,Y,Z\in \Gamma(\mathcal{V}^{\perp}). 
\end{align}
Setting $Y=Z=U_{2}$ in (\ref{n3}), and then apply Lemma \ref{lemma3} and (\ref{v2}), we get $0=-\mu B(U_{2},U_{2})=-\mu C(U_{2},V_{2})=-\mu^{2}$. Hence, from Theorem \ref{main4}, we have the following corollary.
\begin{corollary}
	Under the assumptions of Theorem \ref{main4}, any screen integrable totally contact screen umbilic nullhypersurface of $\overline{M}$ is totally contact screen geodesic, that is $\mu=0$.
\end{corollary}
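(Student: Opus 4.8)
The plan is to run the identity (\ref{le15}) --- valid on any totally contact screen umbilic hypersurface --- through the two additional hypotheses at our disposal, namely screen integrability and the conclusions of Theorem \ref{main4}. First I would exploit integrability of $S(TM)$: since $C(X,Y)-C(Y,X)=\eta([X,Y])$ on $\Gamma(S(TM))$, integrability makes $C$ symmetric there, so that $C(U,PZ)=C(PZ,U)$ and $C(V,PZ)=C(PZ,V)$. Combining these with the expressions for $C(\,\cdot\,,U)$ and $C(\,\cdot\,,V)$ supplied by Lemma \ref{lemma2} (recall that normality gives $h_U=h_V=0$) evaluates them as $C(U,PZ)=-v_2(PZ)$ and $C(V,PZ)=-v_3(PZ)$.

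Next I would feed in Theorem \ref{main4}, which guarantees that a totally contact screen umbilic hypersurface forces $c=-3$ and $\xi\mu-\mu\tau(\xi)=0$. Putting $X=\xi$ in (\ref{le15}) and substituting $c=-3$ annihilates the entire $\frac{c+3}{4}$ block, while the surviving $\frac{c-1}{2}$ terms are designed to cancel against the $C(U,PZ)$ and $C(V,PZ)$ contributions once the symmetrised expressions above are inserted. After this cancellation the whole identity should collapse to the clean relation $\mu B(Y,PZ)=0$ for all $Y,Z\in\Gamma(\mathcal{V}^{\perp})$.

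Finally, I would specialise to $Y=Z=U_2$. Since $U_2\in\Gamma(S(TM))$ we have $PU_2=U_2$, Lemma \ref{lemma3} gives $B(U_2,U_2)=C(U_2,V_2)$, and the contact screen umbilic condition (\ref{v2}) identifies $C(U_2,V_2)$ with a scalar multiple of $\mu$ through $g(U_2,V_2)=\pm 1$. Hence $\mu B(U_2,U_2)=0$ becomes $\mu^2=0$, forcing $\mu=0$, i.e. $M$ is totally contact screen geodesic.

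I expect the bookkeeping in the second step to be the only real obstacle: one must check that the apparent mismatch between $v_a(Z)$ and $v_a(PZ)$ is harmless --- it is, because $v_a(\xi)=0$ and $B(Y,\xi)=0$ turn $B(Y,Z)$ into $B(Y,PZ)$ --- and that, after setting $c=-3$, the coefficients of $u_2(Y)v_2(Z)$ and $u_3(Y)v_3(Z)$ arising from the $\frac{c-1}{2}$ term match exactly those produced by the symmetrised $C(U,PZ)$ and $C(V,PZ)$, so that every term other than $-\mu B(Y,PZ)$ drops out. Once that cancellation is confirmed the final specialisation is immediate.
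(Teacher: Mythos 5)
Your proposal is correct and follows essentially the same route as the paper: screen integrability gives the symmetry of $C$ and hence $C(U,PZ)=-v_{2}(PZ)$, $C(V,PZ)=-v_{3}(PZ)$ via Lemma \ref{lemma2}; inserting these together with $c=-3$ and $\xi\mu-\mu\tau(\xi)=0$ from Theorem \ref{main4} into (\ref{le15}) collapses it to $-\mu B(Y,PZ)=0$; and the specialisation $Y=Z=U_{2}$ with Lemma \ref{lemma3} and (\ref{v2}) yields $\mu^{2}=0$. Your side remarks (the $v_{a}(Z)$ versus $v_{a}(PZ)$ point handled by $v_{a}(\xi)=0$, $B(Y,\xi)=0$) are exactly the bookkeeping the paper's derivation of (\ref{n3}) relies on, so there is no gap.
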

By direct calculations, while using (\ref{p100}), (\ref{p101}),(\ref{cm4}), (\ref{cm5}), (\ref{p40}), (\ref{le3}), (\ref{v3}) and Lemma \ref{lemma2}, we note that on a contact screen conformal null hypersurface, tangent to $\mathcal{V}$, of a normal indefinite complex contact manifold $\overline{M}$, we have 
	\begin{align}\label{b1}
		&(\nabla_{X}C)(Y,PZ)-(\nabla_{Y}C)(X,PZ)-\varphi [(\nabla_{X}B)(Y,PZ)-(\nabla_{Y}B)(X,PZ)]\nonumber\\
		&\;\;\;\;\;\;\;\;\;\;\;\;\;\;=(X\varphi) B(Y,PZ)-(Y\varphi)B(X,PZ)+\varphi [u_{2}(X)\overline{g}(GY,PZ)\nonumber\\
		&\;\;\;\;\;\;\;\;\;\;\;\;\;\;-u_{2}(Y)\overline{g}(GX,PZ)+u_{3}(X)\overline{g}(HY,PZ)-u_{3}(Y)\overline{g}(HX,PZ)]\nonumber\\
		&\;\;\;\;\;\;\;\;\;\;\;\;\;\;+C(U,PZ)[\overline{g}(GY,X)-\overline{g}(GX,Y)]+C(V,PZ)[\overline{g}(HY,X)\nonumber\\
		&\;\;\;\;\;\;\;\;\;\;\;\;\;\;-\overline{g}(HX,Y)]+v_{2}(Y)\overline{g}(GX,PZ)-v_{2}(X)\overline{g}(GY,PZ)\nonumber\\
		&\;\;\;\;\;\;\;\;\;\;\;\;\;\;+v_{3}(Y)\overline{g}(HX,PZ)-v_{3}(X)\overline{g}(HY,PZ),
		\end{align}
for all $X,Y,Z\in \Gamma(\mathcal{V}^{\perp})$.

\begin{theorem}\label{main5}
	Let $\overline{M}$ be a normal indefinite  complex contact manifold, and $(M,g)$ a contact screen conformal null hypersurface of $\overline{M}$, tangent to the characteristic subbundle $\mathcal{V}$. Then, $c=-3$, i.e; $\overline{M}$ is a space of constant $GH$-sectional curvature $-3$.
\end{theorem}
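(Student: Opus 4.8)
The plan is to follow the template of the proof of Theorem \ref{main4}, but now driving everything through the contact screen conformal master identity (\ref{b1}) rather than the umbilicity identities (\ref{le1})--(\ref{le2}). First I would eliminate the covariant-derivative differences on the left-hand side of (\ref{b1}) by substituting the two ambient Gauss--Codazzi relations (\ref{q6}) and (\ref{q7}), which already encode the space-form curvature (\ref{cm6}) through its coefficients $\tfrac{c+3}{4}$ and $\tfrac{c-1}{4}$. Invoking the defining relation (\ref{v3}), namely $C=\varphi B$ on $\mathcal{V}^{\perp}$, the $\tau$-contributions coming from (\ref{q6}) and (\ref{q7}) collapse into a single multiple of $\tau(X)B(Y,PZ)-\tau(Y)B(X,PZ)$, and what remains is a purely algebraic identity relating the curvature coefficients to $\varphi$, $B$, $C$, the structure one-forms $u_a,v_a$, and the tensors $G,H$.

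Second, exactly as in Theorem \ref{main4}, I would specialise $X=\xi$. Since $B(\xi,\cdot)=0$ and $\eta(\xi)=1$, the term $(X\varphi)B(Y,PZ)-(Y\varphi)B(X,PZ)$ reduces to $(\xi\varphi)B(Y,PZ)$, the $\tau$-dependent terms reduce to a multiple of $\varphi\tau(\xi)B(Y,PZ)$, and the surviving right-hand side is organised around the factor $c+3$ with a subordinate $c-1$ piece, in the spirit of (\ref{le13}) and (\ref{le15}). This leaves only the task of evaluating the remaining $G,H,u_a,v_a$ and $C$-terms at screen test vectors.

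Third, I would extract $c$ by testing the reduced identity on the distinguished vectors $U_a,V_a$ built from $\xi$ and $N$ via (\ref{e11}). Using Lemma \ref{lemma3} together with the conformality (\ref{v3}) I would record the constants $B(V_a,U_a)=C(V_a,V_a)=0$, and from Lemmas \ref{lemma2}--\ref{lemma3} the values $C(U,V_2)=B(U_2,U)=-1$ and $C(V,V_3)=B(U_3,V)=-1$. Plugging in the pairs $(Y,Z)=(V_a,U_a)$ and $(Y,Z)=(U_a,V_a)$, the terms carrying the unknown functions $\varphi$, $\xi\varphi$ and $\tau$ either vanish, because $B$ is zero on these pairs and $PU_a=U_a$, $PV_a=V_a$, or cancel between the two substitutions, while the two curvature sides leave distinct rational multiples of $c+3$. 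Comparing them — as the identities (\ref{le16}) and (\ref{le17}) were compared in Theorem \ref{main4} — forces $c+3=0$, hence $c=-3$; a side relation constraining $\xi\varphi$, $\varphi\tau(\xi)$ should fall out as a byproduct.

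The main obstacle I anticipate is not conceptual but the bookkeeping in the first two steps: after substituting (\ref{q6}) and (\ref{q7}) into (\ref{b1}) one must verify term by term, using Lemma \ref{lemma1} and the identities of Proposition \ref{prop1}, that every $\overline{g}(GY,X)$, $\overline{g}(HY,X)$, $u_a$ and $v_a$ evaluated at the chosen test vectors is either zero or one of the recorded constants, so that the surviving relation is genuinely of the form $(c+3)\cdot(\textrm{nonzero})=0$. Isolating this clean multiple of $c+3$, rather than a relation entangled with the unknown functions $\varphi$ and $\xi\varphi$, is the delicate point on which the argument turns.
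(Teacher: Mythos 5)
Your overall route is the same as the paper's --- reduce (\ref{b1}) at $X=\xi$ via the space--form Gauss--Codazzi identities and then test on the pairs $(Y,Z)=(U_{a},V_{a})$ and $(V_{a},U_{a})$ --- but the specific constant you record in your third step, $B(V_{a},U_{a})=C(V_{a},V_{a})=0$, is not available in the conformal setting, and this is a genuine gap. Lemma \ref{lemma3} does give $C(V_{a},V_{a})=B(V_{a},U_{a})$ on $\mathcal{V}^{\perp}$, and in Theorem \ref{main4} this quantity vanished because total contact \emph{screen umbilicity} (\ref{v2}) yields $C(V_{a},V_{a})=\mu\, g(V_{a},V_{a})=0$, the $V_{a}$ being null. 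Under conformality (\ref{v3}), however, one only gets $C(V_{a},V_{a})=\varphi\, B(V_{a},V_{a})$, and nothing in the hypotheses forces $B(V_{a},V_{a})$ to vanish: $B$ is expressed through $g$ only in the umbilic case, not here. In fact the paper's own conclusion includes $[\xi\varphi-2\varphi\tau(\xi)]B(V_{a},U_{a})=0$ as a \emph{byproduct}, obtained only after $c=-3$ is established; assuming $B(V_{a},U_{a})=0$ at the outset would be circular. So the ``vanishing'' branch of your alternative (``either vanish \dots or cancel'') fails, and any version of the argument in which a single substitution already produces a clean multiple of $c+3$ equal to zero is invalid.

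What does work is exactly your fallback, and it is the paper's mechanism: the unknown terms are not zero but \emph{identical} in the two substitutions. Plugging $(Y,Z)=(U_{a},V_{a})$ and then $(Y,Z)=(V_{a},U_{a})$ into the reduced identity (\ref{le20}), and using $C(U,V_{2})=B(U_{2},U)=-1$, $C(V,V_{3})=B(U_{3},V)=-1$ as you propose, one obtains
\[
[\xi\varphi-2\varphi\tau(\xi)]\,B(U_{a},V_{a})=\tfrac{3}{4}(c+3),
\qquad
[\xi\varphi-2\varphi\tau(\xi)]\,B(V_{a},U_{a})=\tfrac{1}{2}(c+3).
\]
Since $B$ is symmetric, the two left-hand sides agree, whence $\tfrac{3}{4}(c+3)=\tfrac{1}{2}(c+3)$ and $c=-3$; the constraint on $\xi\varphi$ and $\varphi\tau(\xi)$ then falls out as you anticipated. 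If you delete the unjustified claim $B(V_{a},U_{a})=0$ and let the comparison (via symmetry of $B$) carry the whole weight, your proof becomes precisely the paper's.
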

\begin{proof}
	Setting $X=\xi$ in (\ref{b1}) and then apply (\ref{cm6}), (\ref{v34}) and (\ref{v35}), we derive
	\begin{align}\label{le20}
		&\;\;\;\;\;\;\;\;\;\;\;\;\;\;\frac{c+3}{4}[g(Y,PZ)+u_{1}(Z)v_{1}(Y)+2u_{1}(Y)v_{1}(Z)]\nonumber\\
		&\frac{c-1}{4}[u_{2}(Z)v_{2}(Y)+2u_{2}(Y)v_{2}(Z)+u_{3}(Z)v_{3}(Y)+2u_{3}(Y)v_{3}(Z)]\nonumber\\
		&\;\;\;-\varphi [\frac{3c+9}{4}u_{1}(Z)u_{1}(Y)+\frac{3c+1}{4}\{u_{2}(Y)u_{2}(Z)+u_{3}(Y)u_{3}(Z)\}\nonumber\\
		&\;\;\;\;\;\;\;\;\;\;-2\tau(\xi)B(Y,Z)]=(\xi \varphi)B(Y,Z)+2u_{2}(Y)C(U,PZ)\nonumber\\
		&\;\;\;\;\;\;\;\;\;\;\;\;\;\;\;\;\;\;\;\;\;\;\;\;+2u_{3}(Y)C(V,PZ)-v_{2}(Y)u_{2}(Z)\nonumber\\
		&\;\;\;\;\;\;\;\;\;\;\;\;\;\;\;\;\;\;\;\;\;\;\;\;\;\;\;\;\;\;\;\;\;\;\;\;\;-v_{3}(Y)u_{3}(Z),
	\end{align}
	for all $X,Y,Z\in \Gamma(\mathcal{V}^{\perp})$. Since, by Lemmas \ref{lemma2} and \ref{lemma3}, we have $C(U,V_{2})=B(U,U_{2})=B(U_{2},U)=-1$ and $C(V,V_{3})=B(V,U_{3})=B(U_{3},V)=-1$. Then, letting $Y=U_{a}$ and $Z=V_{a}$, $a=1,2,3$, in (\ref{le20}), we get 
\begin{align}\label{le21}
	[\xi \varphi-2\varphi \tau(\xi)]B(U_{a},V_{a})=\frac{3}{4}(c+3).
\end{align}
On the other hand,  letting $Y=V_{a}$ and $Z=U_{a}$, $a=1,2,3$, in (\ref{le20}), we get 
\begin{align}\label{le22}
	[\xi \varphi-2\varphi \tau{\xi}]B(V_{a},U_{a})=\frac{1}{2}(c+3).
\end{align}
It then follows from (\ref{le21}), (\ref{le22}) and the symmetry of $B$ that $c=-3$, and $[\xi \varphi-2\varphi \tau(\xi)]B(V_{a},U_{a})=0$, for all $a=1,2,3$, which completes the proof.
\end{proof}
\noindent The following is an immediate consequence of Theorem \ref{main5}.
\begin{corollary}
	There exist no any contact screen conformal null hypersurface of $\overline{M}$ such that $c\ne -3$.
\end{corollary}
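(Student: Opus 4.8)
The plan is to follow the template already established in the proof of Theorem~\ref{main4}, with the contact screen conformal master identity (\ref{b1}) playing the role that (\ref{le1}) and (\ref{le2}) played there. Since the whole argument is a comparison of the two Codazzi equations, it is the numerical coefficients produced by the ambient curvature that ultimately carry the information.

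First I would specialize (\ref{b1}) by setting $X=\xi$. On the left, the differences $(\nabla_{\xi}C)(Y,PZ)-(\nabla_{Y}C)(\xi,PZ)$ and $(\nabla_{\xi}B)(Y,PZ)-(\nabla_{Y}B)(\xi,PZ)$ are exactly the objects governed by the Codazzi equations (\ref{v35}) and (\ref{v34}); inserting the ambient curvature (\ref{cm6}) for $\overline{g}(\overline{R}(\xi,Y)\,\cdot\,,N)$ and $\overline{g}(\overline{R}(\xi,Y)\,\cdot\,,\xi)$ rewrites them as explicit combinations of the structure one-forms, carrying the coefficients $\tfrac{c+3}{4}$, $\tfrac{c-1}{4}$ and $\tfrac{3c+9}{4}$, $\tfrac{3c+1}{4}$. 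At the same time, the substitution $X=\xi$ collapses most of the right-hand side of (\ref{b1}): one has $B(\xi,\,\cdot\,)=0$ by Lemma~\ref{lemma2}, together with $u_{a}(\xi)=u(\xi)=v(\xi)=0$, while the surviving $G$- and $H$-terms simplify through Proposition~\ref{prop1}. Reconciling the two sides is what produces the single identity (\ref{le20}).

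Next I would record the boundary values needed to evaluate (\ref{le20}) on paired vectors: from Lemmas~\ref{lemma2} and~\ref{lemma3} one gets $C(U,V_{2})=B(U,U_{2})=B(U_{2},U)=-1$ and $C(V,V_{3})=B(V,U_{3})=B(U_{3},V)=-1$, while $B(U_{a},V_{a})=B(V_{a},U_{a})$ by symmetry of $B$. With these recorded, I would substitute the two dual choices into (\ref{le20}): taking $Y=U_{a}$, $Z=V_{a}$ (for each $a=1,2,3$) isolates the coefficient $\tfrac{3}{4}(c+3)$ and yields (\ref{le21}), namely $[\xi\varphi-2\varphi\tau(\xi)]\,B(U_{a},V_{a})=\tfrac{3}{4}(c+3)$, whereas taking $Y=V_{a}$, $Z=U_{a}$ isolates $\tfrac{1}{2}(c+3)$ and yields (\ref{le22}), namely $[\xi\varphi-2\varphi\tau(\xi)]\,B(V_{a},U_{a})=\tfrac{1}{2}(c+3)$. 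The discrepancy between these two constants stems entirely from the asymmetric factors of $2$ attached to the $u_{1}(Y)v_{1}(Z)$ and $u_{a}(Y)v_{a}(Z)$ terms in (\ref{le20}), so those factors must be tracked with care.

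Finally, since $B$ is symmetric the left-hand sides of (\ref{le21}) and (\ref{le22}) agree, so $\tfrac{3}{4}(c+3)=\tfrac{1}{2}(c+3)$, which forces $c+3=0$, i.e.\ $c=-3$; as a byproduct $[\xi\varphi-2\varphi\tau(\xi)]\,B(V_{a},U_{a})=0$. I expect the genuinely delicate step to be the first one: evaluating (\ref{cm6}) at $X=\xi$ and matching it against the collapsed form of (\ref{b1}) so as to land on precisely the coefficients displayed in (\ref{le20}). Everything downstream is a mechanical substitution, and the conclusion rests solely on the mismatch $\tfrac{3}{4}\neq\tfrac{1}{2}$ between the two specializations.
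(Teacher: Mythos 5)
Your proposal is correct and follows essentially the same route as the paper: the corollary is stated there as an immediate consequence (by contraposition) of Theorem~\ref{main5}, and your argument simply reproduces the paper's proof of that theorem --- setting $X=\xi$ in (\ref{b1}), deriving (\ref{le20}) via (\ref{cm6}), (\ref{v34}), (\ref{v35}), then comparing the substitutions $Y=U_{a},Z=V_{a}$ and $Y=V_{a},Z=U_{a}$ with the symmetry of $B$ to force $\tfrac{3}{4}(c+3)=\tfrac{1}{2}(c+3)$, hence $c=-3$.
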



\end{document}